\definecolor{citegreen}{rgb}{0,0.6,0}
\definecolor{refred}{rgb}{0.8,0,0}
\theoremstyle{plain}
\newtheorem{thm}{Theorem}[section]
\theoremstyle{definition}
\theoremstyle{remark}
\newtheorem{remark}[thm]{Remark}
\numberwithin{equation}{section}
\def\comp{\circ}
\newcommand{\Lebes}{\mathscr{L}}
\def\eps{\varepsilon}
\def\Riem{{\mathrm {Riem}}}
\def\RRR{{\mathrm R}}
\def\BBB{{\mathrm B}}
\def\HHH{{\mathrm H}}
\def\R{\mathbb R}
\def\R{{{\mathbb R}}}
\def\N{\mathbb N}
\def\T{\mathbb T}
\def\Div{\operatorname*{div}\nolimits}
\newcommand{\pa}{\partial}
\newcommand{\intbar}{\etaathop{\int\etaakebox(-13.5,0){\rule[4pt]{.7em}{0.3pt}}
\kern-6pt}\nolimits}
\newcommand{\p}{\varphi}
\newcommand{\grad}{\nabla}
\newcommand{\Id}{\mathrm{Id}}
\DeclarePairedDelimiter{\norma}{\lVert}{\rVert}
\begin{document}

\title[Uniform {S}obolev, interpolation and geometric {Calder\'on--Zygmund} inequalities]{Uniform {S}obolev, interpolation and geometric {Calder\'on--Zygmund} inequalities for graph hypersurfaces}
\author{Serena Della Corte}
\address{Serena Della Corte\\
Delft Institute of Applied Mathematics (DIAM), Delft University of Technology, The Netherlands}
\email{s.dellacorte@tudelft.nl}

\author{Antonia Diana}
\address{Antonia Diana\\
Scuola Superiore Meridionale, Napoli, Italy}
\email{antonia.diana@unina.it}

\author{Carlo Mantegazza}
\address{Carlo Mantegazza\\
Dipartimento di Matematica e Applicazioni ``Renato Caccioppoli'', Universit\`a di Napoli Federico II \& Scuola Superiore Meridionale, Napoli, Italy}
\email{carlo.mantegazza@unina.it}

\date{\today}
%\subjclass[2010]{???, ???}

\begin{abstract} In this note, our aim is to show that families of smooth hypersurfaces of $\R^{n+1}$ which are all ``$C^1$--close'' enough to a fixed compact, embedded one, have uniformly bounded constants in some relevant inequalities for mathematical analysis, like Sobolev, Gagliardo--Nirenberg and ``geometric'' Calder\'on--Zygmund inequalities. 
\end{abstract}

\maketitle
\setcounter{tocdepth}{1}
\tableofcontents

\section{Introduction and preliminaries}\label{intro}

In this note, our aim is to show that families of smooth hypersurfaces of $\R^{n+1}$ which are all ``$C^1$--close'' enough to a fixed compact, embedded one, have uniformly bounded constants in some relevant inequalities for mathematical analysis, like Sobolev, Gagliardo--Nirenberg, ``geometric'' Calder\'on--Zygmund, trace and extension inequalities.  These technical results are quite useful, in particular, in the study of the geometric flows of hypersurfaces, when one studies the behavior of the hypersurfaces ``close'' (in some norm, for instance in $C^1$--norm) to critical ones (possibly ``stable'') or the asymptotic limits of flows existing for all times (see for instance~\cite{AcFuMoJu,AcFuMo,DDMsurvey,DFM}, where such controls on the constants are necessary).

\smallskip

We start by setting up some notation and recall some basic facts about hypersurfaces in Euclidean spaces that we need in the sequel, possible 
references are~\cite{benedetti1,abate,gahula}.

We will consider smooth, compact hypersurfaces $M$, embedded in $\R^{n+1}$, getting a Riemannian metric $g$ by pull--back of the standard scalar product $\langle\cdot\,\vert\,\cdot\rangle$ of $\R^{n+1}$ via the embedding map $\varphi:M\to\R^{n+1}$, hence, turning it into a Riemannian manifold $(M,g)$. Then, we use $\nabla$ for the associated Levi--Civita covariant derivative and $\mu$ for the canonical measure induced by the metric $g$, which actually coincides with the $n$--dimensional Hausdorff measure ${\mathcal{H}}^n$ of $\R^{n+1}$ restricted to $M$.
Then, the components of $g$ in a local chart are
$$
g_{ij}=\left \langle\frac{\pa \varphi}{\pa x_i}\,\Big \vert \frac{\pa \varphi}{\pa x_j}\right \rangle
$$
and the ``canonical'' measure $\mu$, induced on $M$ by the metric $g$ is then locally described by $\mu=\sqrt{\det g_{ij}}\,{\Lebes}^{n}$, where ${\Lebes}^{n}$ is the standard Lebesgue measure on $\R^{n}$.\\
The inner product on $M$, extended to tensors, is given by
$$
g(T , S)=g_{i_1s_1}\dots g_{i_k s_k}g^{j_1
z_1}\dots g^{j_l z_l} T^{i_1\dots
 i_k}_{j_1\dots j_l}S^{s_1\dots
 s_k}_{z_1\dots z_l}
$$
where $g_{ij}$ is the matrix of the coefficients of the metric
tensor in the local coordinates and $g^{ij}$ is its 
inverse. Clearly, the norm of a tensor is then
$$
\vert T\vert=\sqrt{g(T , T)}\,.
$$

The induced Levi--Civita covariant derivative on $(M,g)$ of a vector field $X$ and of a 1--form $\omega$ 
are respectively given by
\begin{equation}\label{Levi}
\nabla _jX^i=\frac{\partial X^i}{\partial
 x_j}+\Gamma^{i}_{jk}X^k\,, \qquad \nabla _j\omega_i=\frac{\partial \omega_i}{\partial
 x_j}-\Gamma^k_{ji}\omega_k\,, 
\end{equation}
where $\Gamma^{i}_{jk}$ are the Christoffel symbols of the connection $\nabla$, expressed
by the formula
\begin{equation}\label{Chr}
\Gamma^{i}_{jk}=\frac{1}{2} g^{il}\Bigl(\frac{\partial\,}{\partial
 x_j}g_{kl}+\frac{\partial\,}{\partial
 x_k}g_{jl}-\frac{\partial\,}{\partial
 x_l}g_{jk}\Bigr)\,.
\end{equation}
With $\nabla^m T$ we will mean the $m$--th iterated covariant
derivative of a tensor $T$.

\smallskip

Being $M$ embedded, we can assume it is a subset of $\R^{n+1}$ (hence the embedding map is the identity) and we denote with $\nu:M\to\R^{n+1}$ its global unit normal vector field, {\em pointing outward}. It is indeed well known (theorem of Jordan--Brouwer, see~\cite[Proposition~12.2]{benedetti1}, for instance) that any compact, embedded $M$ ``divides'' $\R^{n+1}$ in two connected components, one of them bounded (called ``the interior''), both having $M$ as its smooth boundary, hence the hypersurface is orientable and such field $\nu$ exists.

Then, we define the {\em second fundamental form} $\BBB$ which is a symmetric $2$--form given, in a local chart, by its components
\begin{equation}\label{secform}
\BBB_{ij} = - \,\biggl \langle \frac{\pa^2 \varphi}{\pa x_i \pa x_j}\,\biggr\vert \,\nu \biggr \rangle
\end{equation}
and whose trace is the {\em mean curvature} $\HHH= g^{ij} \BBB_{ij}$ of the hypersurface (with these choices, the standard sphere of $\R^{n+1}$ has positive mean curvature).

\begin{remark}\label{graph}
If the hypersurface $M$ is locally the graph of a function $f:U\to\R$ with $U$ an open subset of $\R^{n}$, that is, $M=\{(x,f(x))\ :\ x\in U\}$, then we have 
\begin{equation}\label{metric}
g_{ij}=\delta_{ij}+\frac{\partial f}{\partial x_i}\frac{\partial f}{\partial x_j}\,,\qquad\qquad \nu=-\frac{(\nabla^{\R^n}\! f,-1)}{\sqrt{1+\vert\nabla^{\R^n}\! f\vert^2}}\, ,
\end{equation}
\begin{equation}\label{secondform}
\BBB_{ij}=-\frac{{\mathrm {Hess}}_{ij}^{\R^n}\!f}{\sqrt{1+\vert\nabla^{\R^n}\! f\vert^2}}\, ,
\end{equation}
\begin{equation}\label{meancurvature}
\HHH=-\frac{\Delta^{\R^n}\!f}{\sqrt{1+\vert\nabla^{\R^n}\! f\vert^2}}
+\frac{{\mathrm {Hess}}^{\R^n}\!f(\nabla^{\R^n}\! f,\nabla^{\R^n}\! f)}{\big(\sqrt{1+\vert\nabla^{\R^n}\! f\vert^2}\big)^3}=-
\Div^{\R^n}\!\biggl(\frac{\nabla^{\R^n}\! f}{\sqrt{1+\vert\nabla^{\R^n}\! f\vert^2}}\biggr)
\end{equation}
where ${\mathrm {Hess}}^{\R^n}\!f$ is the (standard) Hessian of the function $f$.
\end{remark}

Then, the following {\em Gauss--Weingarten relations} hold,
\begin{equation}\label{GW}
\frac{\pa^2\varphi}{\pa x_i\pa
 x_j}=\Gamma_{ij}^k\frac{\pa\varphi}{\pa
 x_k}- \BBB_{ij}\nu\qquad\qquad\frac{\pa\nu}{\pa x_j}=
\BBB_{jl}g^{ls}\frac{\pa\varphi}{\pa x_s}\,,
\end{equation}
which easily imply 
\begin{equation}
\nabla^2\varphi=-\BBB\nu\qquad\text{ and }\qquad\Delta\varphi=-\HHH\nu\,.\label{lap}
\end{equation}

The symmetry properties of the covariant derivative of $\BBB$ are
given by the following Codazzi equations,
\begin{equation}\label{codaz0}
\nabla_i\BBB_{jk}=\nabla_j\BBB_{ik}=\nabla_k\BBB_{ij}
\end{equation}
which imply the following {\em Simons' identity} (see~\cite{simons}),
\begin{equation}\label{codaz}
\Delta \BBB_{ij}=\nabla_i\nabla_j\HHH + \HHH \, 
\BBB_{il}g^{ls}\BBB_{sj}-\vert\BBB\vert^2\BBB_{ij}\,.
\end{equation}

Finally, the Riemann tensor can be expressed as ({\em Gauss equations}),
\begin{equation}\label{Gauss-eq}
\RRR_{ijkl}\,=\,\BBB_{ik}\BBB_{jl}-\BBB_{il}\BBB_{jk}\,.
\end{equation}

\smallskip

If now we choose a fixed smooth, compact, embedded hypersurface $M_0$ of $\R^{n+1}$, it is well known (by its compactness and smoothness) that, for $\eps>0$ small enough, $M_0$ has a {\em tubular neighborhood} 
\begin{equation}\label{tubdef}
N_\eps=\bigl\{x \in \R^{n+1} \, : \, d(x,M_0)<\eps\bigr\}
\end{equation}
(where $d$ is the Euclidean distance on $\R^{n+1}$) such that the {\em orthogonal projection map} $\pi:N_\eps\to M_0$ giving the (unique) closest point on $M_0$, is well defined and smooth. Then, if $E$ is ``the interior'' of $M_0$, the {\em signed distance function} $d_E:N_\eps\to\R$ from $M_0$
\begin{equation}\label{sign dist}
d_E(x)=
\begin{cases}
d(x, M_0) &\text{if $x \notin E$}\\
-d(x, M_0) &\text{if $x \in E$}
\end{cases} 
\end{equation}
is smooth in $N_\eps$ and $\nu(x)=\nabla^{\R^{n+1}}\! d_E(x)$, for every $x\in M_0$. Moreover, for every $x\in N_\eps$, the projection map $\pi$ is given explicitly by 
\begin{equation}\label{eqcar2050}
\pi_E(x)=x-\nabla^{\R^{n+1}}\! d^2_E(x)/2=x-d_E(x)\nabla^{\R^{n+1}}\! d_E(x)
\end{equation}
(indeed, actually $\nabla^{\R^{n+1}}\! d_E(x)=\nabla^{\R^{n+1}}\! d_E(\pi_E(x))=\nu(\pi_E(x))$ for every $x\in N_\eps$).

From now on, we will consider smooth hypersurfaces contained in $N_\eps$ that can be written (possibly after reparametrization) as graph over $M_0$, that is,
\begin{equation}\label{psidescr}
M=\bigl\{x+\psi(x)\nu(x) \, : \, x\in M_0\bigr\},
\end{equation}
for a smooth ``height function'' $\psi:M_0\to\R$ with $|\psi(x)|<\eps$, for every $x\in M_0$.\\
We define the following families (clearly all containing $M_0$),
\begin{equation}\label{psidescr2}
\mathfrak{C}^1_\delta(M_0)=\Bigl\{M=\bigl\{x+\psi(x)\nu(x) \, : \, x\in M_0\bigr\}\,\, \text{for a smooth $\psi:M_0\to\R$ with $\norma{\psi}_{C^1(M_0)}<\delta$}\,\Bigr\}
\end{equation}
where $\delta\in(0,\eps)$ and we are considering on $M_0$ the induced metric from $\R^{n+1}$ (in order to define $|d\psi|$). Sometimes, we will use the expression ``$C^1$--close to $M_0$'', meaning that the above constant $\delta$ is small. Moreover, since we will use it, we also define the subfamily $\mathfrak{C}^{1,\alpha}_\delta(M_0)$ of the hypersurfaces $M\in\mathfrak{C}^1_\delta(M_0)$ such that the ``height function'' satisfies $\norma{\psi}_{C^{1,\alpha}(M_0)}<\delta$.

We are going to see that the constants in Sobolev, Gagliardo--Nirenberg, some geometric Calder\'on--Zygmund inequalities, trace and extension inequalities are uniformly bounded, depending only on $M_0$ and $\delta$.

Before starting discussing that, we introduce another technical construction. 
We notice that, possibly choosing a smaller $\eps>0$, the tubular neighborhood $N_\eps$ of $M_0$ defined above, can be covered by a finite number of open hypercubes $Q_1,\dots,Q_k\subseteq\R^{n+1}$ respectively centered at some points $p_1,\dots,p_k\in M_0$, such that, for every $i\in\{1,\dots,k\}$ and every $M\in\mathfrak{C}^1_\delta(M_0)$, with $\delta\in(0,\varepsilon)$, the ``pieces'' of hypersurfaces $M\cap Q_i$ can be written as {\em orthogonal} graphs on the affine hyperplanes $\Pi_{p_i}M_0=p_i+T_{p_i}M_0$, parallel to the tangent hyperplanes to $M_0$ at the points $p_i\in M_0$ and passing through them, as in the following figure.
\begin{figure}[H]
\centering
\includegraphics[scale=1.1, clip]{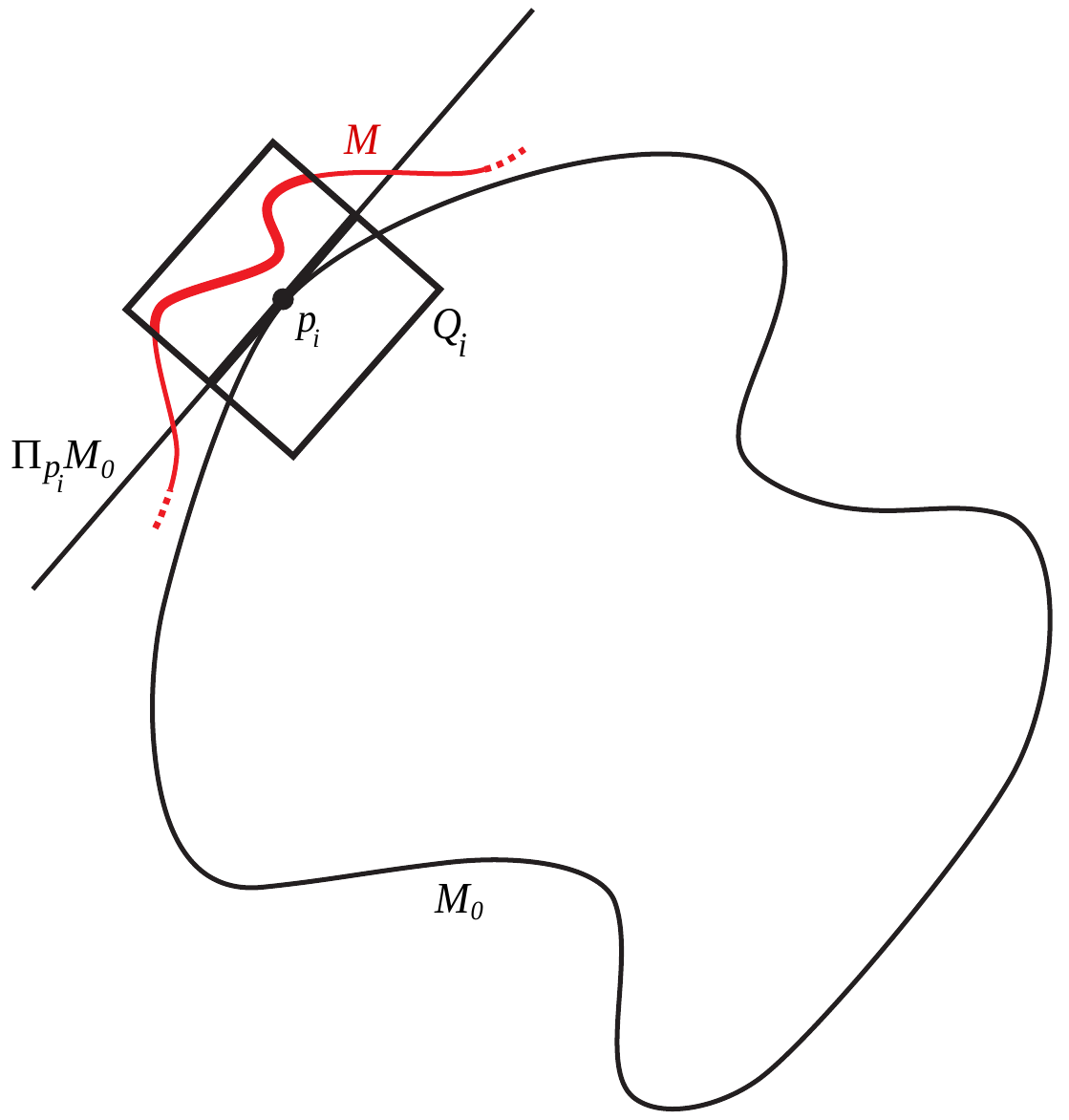}
\label{cut11fig}
\end{figure}
\noindent Then, we let $\rho_i:\R^{n+1}\to[0,1]$ a smooth partition of unity (with compact support) for $N_\eps$, associated to the open covering $Q_i$, hence, if $M\in\mathfrak{C}^1_\delta(M_0)$ and $u:M\to\R$, there holds
$$
u(y)=\sum_{i=1}^ku(y)\rho_i(y)
$$
with the compact support of $u\rho_i:M\to\R$ contained in the piece $M\cap Q_i$ of the hypersurface $M$, which is described as the graph of a smooth function $\theta_i:\Pi_{p_i}M_0\to\R$, that is, $M\cap Q_i$ is the image of the map $x\mapsto\Theta(x)=x+\theta_i(x)\nu(p_i)$ on $\Pi_{p_i}M_0\cap Q_i$. Moreover, it is easy to see that, possibly choosing an even smaller $\varepsilon>0$, we have $\Vert\theta_i\Vert_{C^1(\Pi_{p_i}M_0)}\leqslant2\delta$, for every $i\in\{1,\dots,k\}$, since also $M_0$ can be locally written as an orthogonal graph on $\Pi_{p_i}M_0$.

We notice and underline that the family (and the number) of the hypercubes $Q_i$, as well as the width $\eps>0$ of the tubular neighborhood $N_\eps$ that we considered for this construction, only depend on $M_0$, precisely on its local and global geometry (in particular, on its second fundamental form $\BBB_0$ -- see~\cite{breun3} for more details).

\smallskip

{\em We highlight to the reader that in the following, we will often denote with $C$ a constant which may vary from a line to another.}

\section{Sobolev, Poincar\'e and Gagliardo--Nirenberg interpolation inequalities}\label{SobSec}

We start discussing the Sobolev constants $C_S(M,p)$ of any compact $n$--dimensional hypersurface $M$, for every $p\in[1,n)$, entering in the following inequalities (which are known to hold, see~\cite[Chapter~2]{aubin0}, for instance),
\begin{equation}\label{sob1}
\Vert u\Vert_{L^{p^*}(M)}=
\Bigl(\int_M\vert u\vert^{p^*}\,d\mu\Bigr)^{1/p^*}\leqslant C_S(M,p)\Bigl(\int_M\vert \nabla u\vert^p+\vert u\vert^p\,d\mu\Bigl)^{1/p}=C_S(M,p)\Vert u\Vert_{W^{1,p}(M)}
\end{equation}
for every $C^1$--function $u:M\to\R$ (or $u\in W^{1,p}(M)$), where $p^*=\frac{np}{n-p}$ is the {\em Sobolev conjugate exponent} of $p$. It is well known that a bound on $C_S(M,1)$ implies a bound on $C_S(M,p)$, for every $p\in[1,n)$ (see~\cite[Chapter~2, Section~5]{aubin0}, for instance), hence we concentrate on the case $p=1$, where $1^*=\frac{n}{n-1}$.

We first want to argue localizing things by means of the construction of the previous section. We then have a finite family of hypercubes $Q_i$ centered at $p_i\in M_0$, the partition of unity $\rho_i$ and a parametrization $x\mapsto\Theta(x)=x+\theta_i(x)\nu_i$ on $\Pi_{p_i}M_0\cap Q_i$ of each piece $M\cap Q_i$ of any smooth hypersurface $M\in\mathfrak{C}^1_\delta(M_0)$, where $\nu_i=\nu(p_i)$ and the functions $\theta_i:\Pi_{p_i}M_0\to\R$ satisfy $\Vert\theta_i\Vert_{C^1(\Pi_{p_i}M_0)}\leqslant2\delta$, for every $i\in\{1,\dots,k\}$. Moreover, in dealing with any piece $M\cap Q_i$, we will assume (without clearly losing generality) that $\Pi_{p_i}M_0=\R^n\subseteq\R^{n+1}$ and we observe that in such parametrization, by formula~\eqref{metric}, the Riemannian measure $\mu$ associated to the (induced) metric $g$ on $M$ is given by $\mu=J\Theta\,\mathscr{L}^n$, with $\mathscr{L}^n$ the Lebesgue measure on $\Pi_{p_i}M_0=\R^n$ and $J\Theta=\sqrt{1+\vert\nabla^{\R^n}\!\theta_i\vert^2\,}$, which clearly satisfies $1\leqslant J\Theta\leqslant 1+2\delta$.

For every $C^1$--function $u:M\to\R$, we can write
$$
\Bigl(\int_M\vert u\vert^{\frac{n}{n-1}}\,d\mu\Bigr)^{\frac{n-1}{n}}=\Bigl(\int_{M}\Bigl\vert\sum_{i=1}^ku\rho_i\Bigr\vert^{\frac{n}{n-1}}\,d\mu\Bigr)^{\frac{n-1}{n}}\leqslant\sum_{i=1}^k\Bigl(\int_{M\cap Q_i}\vert u\rho_i\vert^{\frac{n}{n-1}}\,d\mu\Bigr)^{\frac{n-1}{n}}
$$
as the compact support of $u\rho_i$ is contained in $M\cap Q_i$.\\
Then, for every $C^1$ function $v:M\to\R$ with compact support in $M\cap Q_i$, there holds
\begin{align*}
\Bigl(\int_{M\cap Q_i}\vert v(y)\vert^{\frac{n}{n-1}}\,d\mu(y)\Bigr)^{\frac{n-1}{n}}
=&\,\Bigl(\int_{\R^n}\vert v(x+\theta_i(x)\nu_i)\vert^{\frac{n}{n-1}}J\Theta(x)\,dx\Bigr)^{\frac{n-1}{n}}\\
\leqslant&\,C(\delta)\Bigl(\int_{\R^n}\vert v(x+\theta_i(x)\nu_i)\vert^{\frac{n}{n-1}}\,dx\Bigr)^{\frac{n-1}{n}}\,,
\end{align*}
as $J\Theta\leqslant 1+2\delta$ and applying the Sobolev inequality for functions with compact support in $\R^n$, we have
\begin{align}
\Bigl(\int_{\R^n}\vert v(x+\theta_i(x)\nu_i)\vert^{\frac{n}{n-1}}\,dx\Bigr)^{\frac{n-1}{n}}
\leqslant&\,C\int_{\R^n}\vert \nabla^{\R^n}\! [v(x+\theta_i(x)\nu_i)]\vert\,dx\\
=&\,C\int_{\R^n}\bigl\vert \nabla v(x+\theta_i(x)\nu_i)\comp\bigl(\Id+\nabla^{\R^n}\!\theta_i(x)\otimes\nu_i\bigr)\bigr\vert\,dx\\
\leqslant&\,C\int_{\R^n}\vert \nabla v(x+\theta_i(x)\nu_i)\vert\,\bigl\vert\Id+\nabla^{\R^n}\!\theta_i(x)\otimes\nu_i\bigr\vert\,dx\\
=&\,C\int_{\R^n}\vert \nabla v(x+\theta_i(x)\nu_i)\vert\,\sqrt{1+\vert\nabla^{\R^n}\!\theta_i\vert^2\,}\,dx\\
=&\,C\int_M\vert \nabla v(y)\vert\,d\mu(y)\,,\label{tangent}
\end{align}
as $\sqrt{1+\vert\nabla^{\R^n}\!\theta_i\vert^2\,}=J\Theta$. Hence,
$$
\Bigl(\int_{M\cap Q_i}\vert v\vert^{\frac{n}{n-1}}\,d\mu\Bigr)^{\frac{n-1}{n}}\leqslant 
C(\delta)\int_M\vert \nabla v\vert\,d\mu
$$
and setting $v_i=u\rho_i$, after summing on $i\in\{1,\dots,k\}$, we conclude
\begin{align}
\Bigl(\int_M\vert u\vert^{\frac{n}{n-1}}\,d\mu\Bigr)^{\frac{n-1}{n}}
\leqslant&\,\sum_{i=1}^k\Bigl(\int_{M\cap Q_i}\vert v_i\vert^{\frac{n}{n-1}}\,d\mu\Bigr)^{\frac{n-1}{n}}\\
\leqslant&\,C(\delta)\sum_{i=1}^k\int_M\vert \nabla v_i\vert\,d\mu\\
=&\,C(\delta)\sum_{i=1}^k\int_M\vert \nabla u\vert\rho_i+\vert u\vert\,\vert \nabla\rho_i\vert\,d\mu\\
\leqslant&\,C(\delta)\int_M\vert \nabla u\vert\,d\mu+C(M_0,\delta)\int_M\vert u\vert\,d\mu\,,
\label{sobolev1}
\end{align}
as $\vert\nabla\rho_i\vert\leqslant C(M_0,\delta)$, for every $i\in\{1,\dots,k\}$. This clearly gives a uniform bound on $C_S(M,1)$ for all the hypersurfaces in $\mathfrak{C}^1_\delta(M_0)$, depending only on $M_0$ (in particular, on its second fundamental form $\BBB_0$, as we said in the previous section) and $\delta>0$.

\smallskip

Let now see an alternate line, based on the ``global'' graph representation of the hypersurfaces $M\in\mathfrak{C}^1_\delta(M_0)$ over $M_0$.\\
For every $C^1$ function $u:M\to\R$, we have
\begin{equation*}
\Bigl(\int_M\vert u(y)\vert^{\frac{n}{n-1}}\,d\mu(y)\Bigr)^{\frac{n-1}{n}}
=\Bigl(\int_{M_0}\vert u(x+\psi(x)\nu(x))\vert^{\frac{n}{n-1}}\,J\Psi(x)\,d\mu_0(x)\Bigr)^{\frac{n-1}{n}}
\end{equation*}
where $J\Psi$ is the Jacobian of the map $\Psi:M_0\to M$ and it is an easy check that, at every point $x\in M_0$, there holds
\begin{equation}\label{stimaJPsi}
\frac{1}{C(\BBB_0,\delta)}\leqslant J\Psi\leqslant C(\BBB_0,\delta)\,,
\end{equation}
for some constant $C(\BBB_0,\delta)>0$, where $\BBB_0$ is the second fundamental form of $M_0$. Moreover, $C(\BBB_0,\delta)$ goes to 1 as $\delta\to0$. Notice that the fact that $\BBB_0$ appears here can be seen from the expression of $d\Psi$, that is
$$
d\Psi_x=\Id_{T_xM_0}+d\psi_x\otimes\nu(x)+\psi(x)d\nu_x\,,
$$
as, by the Gauss--Weingarten relations~\eqref{GW}, $d\nu_x$ is related to $\BBB_0(x)$.\\
Then, by applying the Sobolev inequality holding for $M_0$, we have
\begin{align*}
\Bigl(\int_{M_0}\vert u(x+\psi(x)\nu(x))\vert^{\frac{n}{n-1}}\,d\mu_0(x)\Bigr)^{\frac{n-1}{n}}
\leqslant&\,C_S(M_0,1)\int_{M_0}\bigl\vert \nabla^{0}[u(x+\psi(x)\nu(x))]\bigr\vert\,\,d\mu_0(x)\\
&\,+C_S(M_0,1)\int_{M_0}\vert u(x+\psi(x)\nu(x))\vert\,d\mu_0(x)\\
\leqslant&\,C_S(M_0,1)\int_{M_0}\vert \nabla u(x+\psi(x)\nu(x))\vert\,\vert d\Psi(x)\vert\,d\mu_0(x)\\
&\,+C_S(M_0,1)\int_{M_0}\vert u(x+\psi(x)\nu(x))\vert\,d\mu_0(x)\\
\leqslant&\,C(M_0,\delta)\int_{M}\vert \nabla u(y)\vert\,J\Psi^{-1}(y)\,d\mu(y)\\
&\,+C(M_0,\delta)\int_{M}\vert u(y)\vert\,J\Psi^{-1}(y)\,d\mu(y)\\
\leqslant&\,C(M_0,\delta)\Big(\int_{M}\vert \nabla u(y)\vert\,d\mu(y)+\int_{M}\vert u(y)\vert\,d\mu(y)\Big)\,.
\end{align*}
Hence,
$$
\Bigl(\int_M\vert u\vert^{\frac{n}{n-1}}\,d\mu\Bigr)^{\frac{n-1}{n}}\leqslant 
C(M_0,\delta)\Big(\int_{M}\vert \nabla u\vert\,d\mu+\int_{M}\vert u\vert\,d\mu\Big)\,.
$$
As before, this means that the constant $C(M_0,\delta)$ is a uniform bound on $C_S(M,1)$ for all the hypersurfaces in $\mathfrak{C}^1_\delta(M_0)$, moreover, since $C(M_0,\delta)\to1$, as $\delta\to0$, it also shows the continuous dependence of $C_S(M,1)$ under the $C^1$--convergence of the hypersurfaces.

\begin{thm}
Let $M_0\subseteq\R^{n+1}$ be a smooth, compact hypersurface, embedded in $\R^{n+1}$. Then, there exist uniform bounds, depending only on $M_0$ and $\delta$ (more precisely, on the ``$C^1$-- structure'' of the immersion of $M_0$ in $\R^{n+1}$, its dimension and its second fundamental form), for all the hypersurfaces $M\in\mathfrak{C}^1_\delta(M_0)$ on:
\begin{enumerate}[label={\em({\roman*})}]

\item{the volume of $M$ from above and below away from zero,\label{item0}} 

\item{the Sobolev constants for $p\in[1,n)$ of the embeddings $W^{1,p}(M)\hookrightarrow L^{p^*}(M)$,\label{item1}}

\item{the Sobolev constants for $p\in(n,+\infty]$ of the embeddings $W^{1,p}(M)\hookrightarrow C^{0,1-n/p}(M)$, \label{item2}}

\item{the constants in the Poincar\'e--Wirtinger inequalities on $M$, for $p\in[1,+\infty]$,\label{item3}}

\item{the constants in the embeddings of the fractional Sobolev spaces $W^{s,p}(M)$,\label{item4}}

\item{the constants in the Gagliardo--Nirenberg interpolation inequalities on $M$.\label{item5}}

\end{enumerate}
Moreover, all these bounds go to the corresponding constants for $M_0$, as $\delta\to 0$.\end{thm}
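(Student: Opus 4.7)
The plan is to transfer each inequality from $M_0$ to any $M\in\mathfrak{C}^1_\delta(M_0)$ by pullback along the $C^1$--diffeomorphism $\Psi\colon M_0\to M$, $\Psi(x)=x+\psi(x)\nu(x)$, exactly in the spirit of the ``global graph'' derivation of the $p=1$ Sobolev inequality carried out above. The quantitative backbone is the two--sided Jacobian estimate \eqref{stimaJPsi} together with the analogous two--sided bound on $|d\Psi|$ and $|d\Psi^{-1}|$, which follows from $d\Psi_x=\Id_{T_xM_0}+d\psi_x\otimes\nu(x)+\psi(x)d\nu_x$ and the Gauss--Weingarten relations \eqref{GW}; both bounds depend only on $\BBB_0$ and $\delta$ and tend to $1$ as $\delta\to 0$, which will automatically produce the continuity at $M_0$ of every constant in the list. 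Throughout, one sets $v=u\comp\Psi$ and uses that $L^p$--norms, mean values, and first--order derivative norms of $u$ on $M$ are equivalent to the corresponding quantities of $v$ on $M_0$ with constants depending only on $M_0$ and $\delta$.

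Items \ref{item0}, \ref{item1}, \ref{item3} and \ref{item5} are then essentially routine. For \ref{item0}, $\vol(M)=\int_{M_0}J\Psi\,d\mu_0$ is pinched between explicit multiples of $\vol(M_0)$. For \ref{item1}, the case $p=1$ is already done; the cases $p\in(1,n)$ follow by the standard argument of \cite[Ch.~2, \S5]{aubin0}, applying the $L^1$--inequality to $|u|^q$ with $q=\frac{p(n-1)}{n-p}$. For \ref{item3} one pulls back the Poincar\'e--Wirtinger inequality on $M_0$, after checking that the two means $u_M$ and $v_{M_0}$ differ by a quantity controlled by the Jacobian error (for $p\in[1,n)$ one can alternatively deduce it from \ref{item0}--\ref{item1} by the classical Meyers--Ziemer argument). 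The Gagliardo--Nirenberg inequalities \ref{item5} involve only $L^p$--norms of $u$ and of $\nabla u$, so the same pullback transfers them from $M_0$ to $M$ with constants depending only on $M_0$ and $\delta$.

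For \ref{item2} and \ref{item4} one additional ingredient is needed, namely the bi--Lipschitz comparison
$$
\tfrac{1}{C}\,d_{M_0}(x,y)\leqslant d_M(\Psi(x),\Psi(y))\leqslant C\,d_{M_0}(x,y)\qquad\forall\,x,y\in M_0,
$$
with $C=C(\BBB_0,\delta)\to 1$ as $\delta\to 0$. Locally this follows from the Lipschitz bounds on $\Psi$ and $\Psi^{-1}$, while globally one must relate the intrinsic distance on $M$ to the ambient one uniformly in the family $\mathfrak{C}^1_\delta(M_0)$; this is where $\BBB_0$ re--enters, via a uniform reach bound. Once this metric equivalence is in hand, \ref{item2} follows either by pulling back Morrey's inequality on $M_0$ or by the localization of Section~\ref{SobSec} applied piece by piece on each $M\cap Q_i$, on which the intrinsic and Euclidean Hölder norms are equivalent with controlled constants; and \ref{item4} follows by a change of variables in the double integral defining $[u]_{W^{s,p}(M)}$, exploiting the Jacobian bound on $J\Psi\otimes J\Psi$ on the product and the bi--Lipschitz comparison at the denominator $d_M^{n+sp}$.

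The main technical obstacle I foresee is precisely the uniform bi--Lipschitz comparison of intrinsic distances needed for \ref{item2} and \ref{item4}: away from the diagonal it is a direct consequence of $\Psi$ being a bi--Lipschitz ambient map, but near the diagonal one has to rule out that any $M\in\mathfrak{C}^1_\delta(M_0)$ develops a small ``intrinsic thickness'' at scales where the ambient and intrinsic metrics start to diverge, and this is where the dependence on the curvature of $M_0$ rather than just on its $C^1$--structure enters. All the remaining steps amount to careful but routine change--of--variables computations, and in each of them the $\delta$--dependence of the resulting constant can be read off from \eqref{stimaJPsi} to yield the asserted convergence to the corresponding constants of $M_0$ as $\delta\to 0$.
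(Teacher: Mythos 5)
Your overall strategy (pull everything back to $M_0$ along $\Psi(x)=x+\psi(x)\nu(x)$, using the two--sided bounds on $J\Psi$ and $d\Psi$) is the same ``global graph'' route the paper uses for the Sobolev constant, and most items go through: (i), (ii), (iv) and the fractional seminorms transfer by change of variables as you describe, and your direct treatment of the Poincar\'e--Wirtinger inequality (comparing the two means via the Jacobian error) is a legitimate, in fact more quantitative, alternative to the paper's compactness/contradiction argument. Two side remarks: the intrinsic--vs--ambient distance comparison you single out as the main obstacle is not actually needed in the paper, because there both the H\"older seminorm in (iii) and the Gagliardo seminorm in (v) are defined using the \emph{ambient} Euclidean distance $|y-y^*|$, and the cross--chart terms are handled by slightly enlarged cubes $\widetilde Q_i$ (so $|y-y^*|\geqslant\sigma$ off the diagonal); your bi--Lipschitz comparison is true for the graphs in $\mathfrak{C}^1_\delta(M_0)$ (the tubular--neighborhood projection gives the lower bound), but it is an extra ingredient, not a necessity.

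The genuine gap is in item (vi). You assert that the Gagliardo--Nirenberg inequalities ``involve only $L^p$--norms of $u$ and of $\nabla u$'', but inequality~\eqref{interpGN} involves $\Vert\nabla^j u\Vert_{L^p(M)}$ and $\Vert\nabla^m u\Vert_{L^r(M)}$ for arbitrary integers $0\leqslant j<m$, i.e.\ higher covariant derivatives on $M$. These do \emph{not} transfer under the pullback by $\Psi$ with constants depending only on $M_0$ and $\delta$: already for $m=2$, $\nabla^2(u\comp\Psi)$ involves second derivatives of $\psi$ (equivalently, the second fundamental form of $M$, via the Gauss--Weingarten relations), and in the family $\mathfrak{C}^1_\delta(M_0)$ only $\Vert\psi\Vert_{C^1}$ is controlled, so these terms are unbounded and your transfer argument breaks down for all $m\geqslant 2$. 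This is exactly why the paper does not pull back the Gagliardo--Nirenberg inequalities but reproves them \emph{intrinsically on each $M$}, following the proof of Theorem~3.70 in~\cite{aubin0}: that proof uses only the Sobolev--Poincar\'e inequality~\eqref{SP} (which is first order and has a uniform constant by points (ii) and (iv)), the uniform volume bounds of point (i), and ``universal'' inequalities whose constants do not depend on the hypersurface at all. To repair your proof of (vi) you would need to replace the pullback step by such an intrinsic argument (or otherwise restrict to $m=1$), since no bound on $\nabla^m u$ for $m\geqslant2$ can be transported through a merely $C^1$--controlled diffeomorphism.
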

\begin{proof}\ 

{\em\ref{item0}} This is trivial due to the $C^1$--closedness of $M$ to $M_0$.
 
{\em\ref{item1}} As explained at the beginning of the section, we can estimate the constant in the Sobolev inequality for $p\in[1,n)$, by means of $C_S(M,1)$, which is uniformly bounded for all the hypersurfaces $M\in\mathfrak{C}^1_\delta(M_0)$, by the above discussion.

{\em\ref{item2}} If $p>n$, we show that there exists a uniform constant $C(M_0,p,\delta)$ such that
\begin{equation}\label{p>n}
\norma{u}_{C^{0,\alpha}(M)} \leqslant C(M_0,p,\delta)\norma{u}_{W^{1,p}(M)}
\end{equation}
with $\alpha=1 - {n}/{p}$ and 
$$
\norma{u}_{C^{0,\alpha}} = \sup_{y\in M} \vert u(y) \vert + \sup_{y,y^*\in M,\ y\neq y^*}\frac{|u(y)-u(y^*)|}{\vert y- y^* \vert^\alpha}\,,
$$
for all $M\in\mathfrak{C}^1_\delta(M_0)$ and every $C^1$ function $u:M\to\R$.\\
In the same setting and notation at the beginning of this section, it is easy to see that we can choose a special family of hypercubes $Q_i$ such that enlarging their edges of a small value $\sigma>0$, we have hypercubes $\widetilde{Q}_i$ with the further property that $M\cap \widetilde{Q}_i$ can be still written as an orthogonal graph on $\Pi_{p_i}M_0=\R^n\subseteq\R^{n+1}$.\\
The following holds
\begin{align}
\sup_{y\in M} \vert u(y) \vert &\leqslant\sum_{i=1}^k\sup_{y\in M \cap Q_i} \vert u(y)\rho_i(y) \vert
\end{align}
and for every $C^1$ function $v:M\to\R$ with compact support in $M\cap Q_i$, by applying the Sobolev inequality for $p>n$ in $\R^n$ and arguing as in obtaining estimate~\eqref{tangent}, we have
\begin{align}
\sup_{y \in M \cap Q_i} \vert v(y) \vert =&\, \sup_{x\in \R^n}\vert v(x + \theta_i(x)\nu_i)\vert\\
\leqslant&\,C\Big(\int_{\R^n}\bigl\vert \nabla v(x+\theta_i(x)\nu_i)\comp\bigl(\Id+\nabla^{\R^n}\!\theta_i(x)\otimes\nu_i\bigr)\bigr\vert^p\,dx\Bigr)^{1/p}\\
\leqslant&\,C(\delta)\Bigl(\int_{\R^n}\vert \nabla v(x+\theta_i(x)\nu_i)\vert^p\,dx\Bigr)^{1/p}\\
\leqslant&\,C(\delta)\Bigl(\int_{\R^n}\vert \nabla v(x+\theta_i(x)\nu_i)\vert^pJ\Theta\,dx\Bigr)^{1/p}\\
=&\,C(\delta)\Bigl(\int_M\vert \nabla v(y)\vert^p\,d\mu(y)\Bigr)^{1/p}\,,\label{tangent2}
\end{align}
as $J\Theta\geqslant1$.
Setting $v_i=u\rho_i$ and estimating as in getting inequality~\eqref{sobolev1}, we conclude
\begin{equation}
\sup_M \vert u \vert\leqslant C(M_0,p,\delta)\Big(\int_M\vert \nabla u\vert^p+\vert u\vert^p\,d\mu\Big)^{1/p}\,.\label{sup}
\end{equation}
Regarding the seminorm $[u]_{C^{0,\alpha}}= \sup_{y,y^*\in M,\ y\neq y^*}\frac{|u(y)-u(y^*)|}{\vert y- y^* \vert^\alpha}$, given two points $y,y^*\in M$, we have
\begin{align}\label{seminorm}
\vert u(y)-u(y^*) \vert = \Big\vert \sum_{i=1}^{k} v_i(y) - v_i(y^*) \Big\vert \leqslant\sum_{i=1}^k \vert v_i(y) - v_i(y^*) \vert\,.
\end{align}
Then, for any $C^1$ function $v:M\to \R$ with compact support in $M\cap Q_i$, if $y$ and $y^*$ both belong to the intersection of $M$ with the ``enlarged'' hypercube $\widetilde{Q}_i$, we can write $y=x +\theta_i(x)\nu_i$ and $y^*=x^*+\theta_i (x^*)\nu_i$ for some $x,x^*\in\widetilde{Q}_i\cap\Pi_{p_i}M_0$ (by our initial choice of the family $Q_i)$ and there holds
\begin{align}
\vert v(y) - v(y^*) \vert =&\, \vert v(x+\theta_i (x)\nu_i)-v(x^* +\theta_i(x^*)\nu_i) \vert \\
\leqslant &\,C(M_0,p) \,\vert x-x^*\vert ^{\alpha}\,\norma{\nabla^{\R^n}\! (v\comp\Theta)}_{L^p(\R^n)}\\
\leqslant &\,C(M_0,p,\delta) \,\vert y-y^*\vert ^{\alpha}\,\norma{\nabla^{\R^n}\! (v\comp\Theta)}_{L^p(\R^n)}\\
\leqslant &\,C(M_0,p,\delta) \,\vert y - y^* \vert ^{\alpha} \,\norma{\nabla v}_{L^p(M)}\,,\label{ineq_inR}
\end{align}
where the first inequality follows as in the proof of Theorem~4 in Section~5.6.2 of~\cite{Ev}, the second one holds since $\vert x-x^*\vert\leqslant\vert y-y^*\vert$ and the third one is obtained arguing like in estimate~\eqref{tangent2}.\\
If both $y^*$ and $y$ do not belong to $M\cap \widetilde{Q}_i$ clearly $\vert v(y) - v(y^*) \vert=0$, while if $y\in M\cap \widetilde{Q}_i$ with $v(y)\not=0$ but $y^*\not\in M\cap \widetilde{Q}_i$, then $y\in M\cap Q_i$, hence $|y-y^*|\geqslant\sigma$ and
$$
\frac{\vert v(y) - v(y^*) \vert}{|y-y^*|^\alpha}\leqslant\frac{\vert v(y)\vert}{\sigma^\alpha}\leqslant C(M_0,p,\delta)\frac{\norma {\nabla v}_{L^p(M)}}{\sigma^\alpha}\,,
$$
by estimate~\eqref{tangent2}.\\
It follows that, for every $y$ and $y^*$ in $M$, we have
$$
\frac{\vert v(y) - v(y^*) \vert}{|y-y^*|^\alpha}\leqslant C(M_0,p,\delta)(1+\sigma^{-\alpha})\norma {\nabla v}_{L^p(M)}\,.
$$
Then, putting together this and inequality~\eqref{seminorm}, we conclude, for every $y$ and $y^*$ in $M$,
$$
\vert u(y)-u(y^*) \vert \leqslant\sum_{i=1}^{k}\vert v_i(y) - v_i(y^*) \vert \leqslant C(M_0,p,\delta)\, \vert y-y^*\vert^{\alpha}\, \norma{\nabla u}_{W^{1,p}(M)}
$$
which, with inequality~\eqref{sup} gives the desired estimate~\eqref{p>n}.

{\em\ref{item3}} In order to obtain the conclusion for the Poincar\'e--Wirtinger inequality, for any $p\in[1,+\infty]$ and all $M\in\mathfrak{C}^1_\delta(M_0)$,
\begin{equation}
\Vert u - \widetilde{u} \Vert_{L^p(M)} \leqslant C(M_0,p,\delta) \norma{\nabla u}_{L^p(M)}\,,
\end{equation}
where $\widetilde{u}= \fint_M u \, d\mu$, we argue by contradiction assuming this uniform estimate is false. Then, for each $k\in\N$, there would exist a graph hypersurface $M_k\in\mathfrak{C}^1_\delta(M_0)$ and a function $u_k \in W^{1,p}(M_k)$ such that
$$
\| u_k - \widetilde{u}_k\|_{L^{p}(M_k)} \geqslant k \| \nabla u_k \|_{L^{p}(M_k)}.
$$
where $\widetilde{u}_k= \fint_{M_k} u_k \, d\mu_k$. We renormalize these function as
$$
v_k = \frac{u_k - \widetilde{u}_k}{\Vert u_k-\widetilde{u}_k\Vert_{L^p(M_k)}}\,,
$$
then, $\int_{M_k} v_k \, d\mu_k=0$, $\Vert v_k\Vert_{L^p(M_k)}=1$ and $\|\nabla v_k\|_{L^{p}(M_k)}\leqslant1/k$.\\
If we consider the functions $w_k=v_k\comp\Psi_k:M_0\to\R$, where $\Psi_k:M_0\to M_k$ is given by 
$\Psi_k(x)=x+\psi_k(x)\nu(x)$ (as in the second way to deal with $C_S(M,1)$, at the beginning of this section), we have 
\begin{equation}\label{eqcar777}
0<C'(M_0,p,\delta)\leqslant\Vert w_k\Vert_{L^p(M_0)}\leqslant C(M_0,p,\delta)\qquad\text{ and }\qquad\|\nabla w_k\|_{L^{p}(M_0)}\leqslant C(M_0,p,\delta)/k\,.
\end{equation}
In particular, the functions $w_k$ are equibounded in $W^{1,p}(M_0)$, hence by the Rellich--Kondrachov embedding theorem and the second estimate~\eqref{eqcar777}, there exists a subsequence (not relabeled) converging in $L^p(M_0)$ to a constant function equal to some $\lambda\in\R$ which cannot be zero, by the first estimate~\eqref{eqcar777}. Moreover, there holds
$$
\int_{M_0} w_k(x)\,J\Psi_k(x)\,d\mu_0(x)=\int_{M_k} w_k\comp\Psi_k^{-1}(y)\,d\mu_k(y)=\int_{M_k} v_k(y)\,d\mu_k(y)=0\,,
$$
hence, since $J\Psi_k$ are equibounded (formula~\eqref{stimaJPsi}) and assuming, possibly passing again to a subsequence, that $\mathrm{Vol}(M_k)\to V>0$, by means of point (i), we conclude
$$
0=\int_{M_0} (w_k(x)-\lambda)\,J\Psi_k(x)\,d\mu_0(x)+\lambda\int_{M_0}\,J\Psi_k(x)\,d\mu_0(x)\to \lambda V\,,
$$
as $k\to\infty$, being $\int_{M_0}\,J\Psi_k(x)\,d\mu_0(x)=\mathrm{Vol}(M_k)$. This is clearly a contradiction, as $\lambda,V\not=0$ and we are done.\\
The case $p=+\infty$ is analogous.

{\em\ref{item4}} As for the ``usual'' (with integer order) Sobolev spaces, all the constants in the embeddings of the fractional Sobolev spaces are also uniform for the family $\mathfrak{C}^1_\delta(M_0)$. The proof is along the same line, localizing with a partition of unity and using the inequalities holding in $\R^n$ (see~\cite{NePaVa} and~\cite{RuSi}).

{\em\ref{item5}} Finally, we want to show that for any $q,r$ real numbers $1\leqslant q\leqslant+\infty, 1\leqslant r\leqslant+\infty$ and $j,m$ integers $0\leqslant j< m$, there exists a constant $C$ depending on $j,m,r,q,\theta,M_0$ and $\delta$ such that the following interpolation inequalities hold
\begin{equation}\label{interpGN}
\norma{\nabla^j u }_{L^p(M)} \leqslant C \big(\norma{\nabla^m u}_{L^r(M)}+\norma{u}_{L^r(M)}\big)^\theta \norma{u}_{L^q(M)}^{1-\theta},
\end{equation}
for all $M\in\mathfrak{C}^1_\delta(M_0)$, where
\begin{equation}
\frac{1}{p}=\frac{j}{n} + \theta\Bigl(\frac{1}{r}-\frac{m}{n}\Bigr)+\frac{1-\theta}{q}
\end{equation}
for every $\theta \in[{j}/{m},1]$ such that $p$ is nonnegative, with the exception of the case $r=\frac{n}{m-j}\neq1$ for which the inequality is not valid for $\theta=1$.\\
Moreover, if $u:M\to\R$ is a smooth function with $\fint_{M}u\,d\mu=0$, inequality~\eqref{interpGN} simplifies to
\begin{equation}\label{gn2}
\norma{\grad^j u}_{L^p(M)} \leqslant C\norma{\grad^m u}_{L^r(M)}^{\theta} \norma{u}_{L^q(M)}^{1-\theta}\,.
\end{equation}
We can obtain inequality~\eqref{interpGN} arguing as in Proposition~5.1 of~\cite{mant5}, essentially following the line of the proof of Theorem~3.70 in~\cite{aubin0}, but substituting the {\em Sobolev--Poincar\`e inequality}~(41) in the argument there with its version where the constant is uniform for all $M\in\mathfrak{C}^1_\delta(M_0)$.
Indeed, the other ``ingredients'' in such proof are a bound on the volume (uniform, by point~(i)) and some ``universal'' inequalities in which the constants do not depend on the hypersurfaces at 
all~\cite[Theorem~3.69]{aubin0}.\\
Such Sobolev--Poincar\`e inequality~(41) in Theorem~3.70 of~\cite{aubin0} reads
\begin{equation}\label{SP}
\Vert u\Vert_{L^{p^*}(M)}\leqslant C_{SP}(M,p)\Vert \nabla u\Vert_{L^p(M)}\,,
\end{equation}
for every $C^1$--function $u:M\to\R$ (or $u\in W^{1,p}(M)$) with $\int_M u\, d\mu=0$, (here, as before, $p^*=\frac{np}{n-p}$ is the Sobolev conjugate exponent) and we actually need it with a uniform constant, in order to get inequality~\eqref{gn2}, by the very same proof of such theorem.\\
This inequality actually follows by points~(ii) and (iv). Indeed, if $u\in W^{1,p}(M)$, by Sobolev inequality, we have
$$ 
\Vert u\Vert_{L^{p^*}(M)}\leqslant C(M_0,p,\delta)\big(\Vert \nabla u\Vert_{L^p(M)}+\Vert u\Vert_{L^p(M)}\big)
$$
and, by Poincar\`e--Wirtinger inequality, as $\widetilde{u}=\int_M u\, d\mu=0$,
$$
\Vert u\Vert_{L^p(M)}\leqslant C(M_0,p,\delta)\Vert \nabla u\Vert_{L^p(M)}
$$
hence, we obtain inequality~\eqref{SP} with $C_{SP}(M,p)$ bounded by a uniform constant $C(M_0,p,\delta)$, for every $M\in\mathfrak{C}^1_\delta(M_0)$.
\end{proof}

\begin{remark}[The fractional Sobolev spaces $W^{s,p}(M)$]\ \\
At point (v) of the theorem above we considered the fractional Sobolev space $W^{s,p}$ on the hypersurfaces $M\in\mathfrak{C}^1_\delta(M_0)$, which are usually defined via local charts for $M$ and partitions of unity, that is, getting back to the definition with the Gagliardo $W^{s,p}$--seminorms in $\R^n$ (we refer to~\cite{AdamsFournier,Dem,NePaVa,RuSi}, for details). They can be also defined equivalently by considering directly on $M$ the Gagliardo $W^{s,p}$--seminorm of a function $u\in L^p(M)$, for $s\in(0,1)$, as follows:
$$
[u]_{W^{s,p}(M)}^p=\int_{M}\int_{M}\frac{|u(x)-u(y)|^p}{|x-y|^{n+sp}}\,d\mu(x)\,d\mu(y)
$$
and setting $\Vert u\Vert_{W^{s,p}(M)}=\Vert u\Vert_{L^p(M)}+[u]_{W^{s,p}(M)}$. Moreover, the constants giving the equivalence of the two norms obtained by localization or by this direct definition are uniform for all $M\in\mathfrak{C}^1_\delta(M_0)$. Indeed, the localization method of Section~\ref{intro}, is ``uniform'' for all $M\in\mathfrak{C}^1_\delta(M_0)$, meaning that the number of necessary local charts is fixed and the diffeomorphisms between $\R^n$ and ``corresponding'' (associated to correlated local charts, that is, being a graph on the same piece of $M_0$, as in our construction) local ``pieces'' of any different hypersurfaces $M\in\mathfrak{C}^1_\delta(M_0)$, are uniformly ``$C^1$--close'' one to each other.
\end{remark}

\section{Geometric Calder\'on--Zygmund inequalities}\label{CZsec}

\begin{thm}\label{GCZ0}
Let $M_0\subseteq\R^{n+1}$ be a smooth, compact hypersurface, embedded in $\R^{n+1}$ and $p\in(1,+\infty)$. Then, if $\delta>0$ is small enough, there exists a constant $C(M_0,p,\delta)$ such that the following geometric Calder\'on--Zygmund inequality holds,
\begin{equation}\label{GCZ}
\norma{\BBB}_{L^p(M)} \leqslant C(M_0,p,\delta)\big(1+ \norma{\HHH}_{L^p(M)}\big)
\end{equation}
for every $M\in\mathfrak{C}^1_\delta(M_0)$. 
\end{thm}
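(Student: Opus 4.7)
The plan is to reduce the geometric inequality to the classical flat Calder\'on--Zygmund estimate $\norma{\Hess^{\R^n}\! u}_{L^p(\R^n)}\leqslant C_p\norma{\Delta^{\R^n}\! u}_{L^p(\R^n)}$ for $u\in C^2_c(\R^n)$, by means of the partition of unity construction at the end of Section~\ref{intro}. With that notation, each piece $M\cap Q_i$ is parametrised as $\Theta(x)=x+\theta_i(x)\nu_i$ over $\Pi_{p_i}M_0\cap Q_i\subseteq\R^n$, with $\norma{\theta_i}_{C^1(\Pi_{p_i}M_0)}\leqslant 2\delta$. From formula~\eqref{secondform}, combined with $g^{ij}=\delta^{ij}-\partial_i\theta_i\,\partial_j\theta_i/(1+|\nabla^{\R^n}\!\theta_i|^2)$ inverted from~\eqref{metric}, one reads off the pointwise bound $|\BBB|\leqslant C(\delta)|\Hess^{\R^n}\!\theta_i|$, while~\eqref{meancurvature} can be rearranged as
\begin{equation}\label{lapexpr}
\Delta^{\R^n}\!\theta_i=-\HHH\sqrt{1+|\nabla^{\R^n}\!\theta_i|^2\,}+\frac{\Hess^{\R^n}\!\theta_i(\nabla^{\R^n}\!\theta_i,\nabla^{\R^n}\!\theta_i)}{1+|\nabla^{\R^n}\!\theta_i|^2},
\end{equation}
yielding the key pointwise estimate $|\Delta^{\R^n}\!\theta_i|\leqslant C|\HHH|+4\delta^2|\Hess^{\R^n}\!\theta_i|$, in which the decisive small coefficient $4\delta^2$ multiplies the Hessian.

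For each $i$ I would choose a smooth cutoff $\xi_i:\R^n\to[0,1]$ with $\xi_i\equiv 1$ on $\Pi_{p_i}M_0\cap Q_i$ and compactly supported in the slightly enlarged hypercube $\widetilde{Q}_i\cap\Pi_{p_i}M_0$ on which $M$ is still an orthogonal graph of (the smooth extension of) $\theta_i$; this is the same enlargement used in point~\ref{item2} of the previous theorem, and its parameters (together with $\norma{\nabla^{\R^n}\!\xi_i}_\infty$ and $\norma{\Hess^{\R^n}\!\xi_i}_\infty$) depend only on $M_0$. Applying the flat Calder\'on--Zygmund estimate to $\xi_i\theta_i\in C^2_c(\R^n)$ and expanding $\Hess^{\R^n}\!(\xi_i\theta_i)$ and $\Delta^{\R^n}\!(\xi_i\theta_i)$ by the Leibniz rule, while absorbing every cutoff-induced term (each of which carries a factor $|\theta_i|$ or $|\nabla^{\R^n}\!\theta_i|\leqslant 2\delta$) into a single constant $C(M_0,p,\delta)$, gives
\begin{equation}\label{CZstep}
\norma{\xi_i\Hess^{\R^n}\!\theta_i}_{L^p(\R^n)}\leqslant C_p\norma{\xi_i\Delta^{\R^n}\!\theta_i}_{L^p(\R^n)}+C(M_0,p,\delta).
\end{equation}
Plugging the pointwise bound for $\Delta^{\R^n}\!\theta_i$ from~\eqref{lapexpr} into~\eqref{CZstep}, and using $d\mu=J\Theta\,\mathscr{L}^n$ with $1\leqslant J\Theta\leqslant 1+2\delta$ to convert flat integrals of $|\HHH\comp\Theta|^p$ into $\norma{\HHH}_{L^p(M)}^p$, we arrive at
\begin{equation}\label{absorbable}
\norma{\xi_i\Hess^{\R^n}\!\theta_i}_{L^p(\R^n)}\leqslant C(M_0,p,\delta)\norma{\HHH}_{L^p(M)}+4\delta^2 C_p\norma{\xi_i\Hess^{\R^n}\!\theta_i}_{L^p(\R^n)}+C(M_0,p,\delta).
\end{equation}

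If $\delta$ is chosen small enough (depending on $p$) that $4\delta^2 C_p\leqslant 1/2$, the last Hessian term on the right of~\eqref{absorbable} can be absorbed into the left, producing $\norma{\xi_i\Hess^{\R^n}\!\theta_i}_{L^p(\R^n)}\leqslant C(M_0,p,\delta)(1+\norma{\HHH}_{L^p(M)})$. Raising this to the $p$-th power, summing over the finite family $i\in\{1,\dots,k\}$, invoking $\xi_i\equiv 1$ on $\Pi_{p_i}M_0\cap Q_i$ together with $|\BBB|\leqslant C(\delta)|\Hess^{\R^n}\!\theta_i|$ and $J\Theta\leqslant 1+2\delta$, and finally extracting the $p$-th root by $(a^p+b^p)^{1/p}\leqslant a+b$, gives the required $\norma{\BBB}_{L^p(M)}\leqslant C(M_0,p,\delta)(1+\norma{\HHH}_{L^p(M)})$.

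The main obstacle I anticipate is the absorption step: the coefficient $4\delta^2$ must multiply the \emph{same} localized Hessian that appears on the left-hand side, which forces $\xi_i$ to be supported on a region where $M$ is already a graph of $\theta_i$; this is why the pair $(Q_i,\widetilde{Q}_i)$ must be prepared carefully and why the smallness assumption on $\delta$ depends on $p$ (since $C_p\to\infty$ as $p\to 1^+$ or $p\to\infty$). It is also the origin of the constant ``$1$'' on the right-hand side of~\eqref{GCZ}, which is needed to store the contribution of the cross terms $\nabla^{\R^n}\!\xi_i\cdot\nabla^{\R^n}\!\theta_i$ and $\theta_i\,\Delta^{\R^n}\!\xi_i$ generated by the cutoff procedure.
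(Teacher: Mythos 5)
Your proposal is correct and follows essentially the same route as the paper's proof: localize via the hypercube/graph construction, apply the flat Calder\'on--Zygmund estimate to a cutoff times the height function $\theta_i$, use the graph expression of $\HHH$ to rewrite $\Delta^{\R^n}\!\theta_i$ and absorb the quadratic term $\mathrm{Hess}^{\R^n}\!\theta_i(\nabla^{\R^n}\!\theta_i,\nabla^{\R^n}\!\theta_i)/(1+|\nabla^{\R^n}\!\theta_i|^2)$ for $\delta$ small, then sum over the finite covering. The only (immaterial) difference is bookkeeping: the paper takes the cutoff equal to $1$ on the half-size cubes $Q_R$ with the smaller cubes covering $M$, whereas you enlarge the $Q_i$ to $\widetilde{Q}_i$; both two-scale choices are legitimate.
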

\begin{proof}
We recall the local graph representation of the hypersurfaces $M \in \mathfrak{C}^1_\delta(M_0)$ over $M_0$, as at the beginning of the previous section. We  have a finite family of hypercubes $Q_i$ centered at $p_i\in M_0$, the partition of unity $\rho_i$ and a parametrization $x\mapsto\Theta(x)=x+\theta_i(x)\nu_i$ on $\Pi_{p_i}M_0\cap Q_i$ of each piece $M\cap Q_i$ of any smooth hypersurface $M\in\mathfrak{C}^1_\delta(M_0)$, where $\nu_i=\nu(p_i)$ and the functions $\theta_i:\Pi_{p_i}M_0\to\R$ satisfy $\Vert\theta_i\Vert_{C^1(\Pi_{p_i}M_0)}\leqslant2\delta$, for every $i\in\{1,\dots,k\}$. Moreover, in dealing with any piece $M\cap Q_i$, we will assume (clearly without losing generality) that $\Pi_{p_i}M_0=\R^n\subseteq\R^{n+1}$ and that $Q_i\cap\Pi_{p_i}M_0$ is the hypercube $Q_{2R}\subseteq\Pi_{p_i}M_0=\R^n$ with edges of length $2R>0$, centered at the origin. Finally, we can also ask that the family of hypercubes $Q'_i\subseteq\R^n$ with edges parallel to the ones of $Q_i$ and of length $R$ (half of the one of $Q_i$), centered at $p_i$, covers any hypersurface $M \in \mathfrak{C}^1_\delta(M_0)$.\\ 
By formulas~\eqref{secondform} and~\eqref{meancurvature}, in the parametrization of $M\cap Q_i$ given by $\Theta$, the second fundamental form $\BBB$ and mean curvature $\HHH$ of $M$ are then expressed by
\begin{equation}\label{secondform+}
\BBB\comp\Theta=-\frac{{\mathrm {Hess}}^{\R^n}\!\theta_i}{\sqrt{1+\vert\nabla^{\R^n}\! \theta_i\vert^2}}\qquad\text{ and }\qquad
\HHH\comp\Theta=-\frac{\Delta^{\R^n}\! \theta_i}{\sqrt{1+\vert\nabla^{\R^n}\! \theta_i\vert^2}}
+\frac{{\mathrm {Hess}}^{\R^n}\!\theta_i(\nabla^{\R^n}\! \theta_i,\nabla^{\R^n}\! \theta_i)}{\big(\sqrt{1+\vert\nabla^{\R^n}\! \theta_i\vert^2}\big)^3}\,.
\end{equation}
Letting  and $\rho:\R^{n}\to[0,1]$ a cut--off function with compact support in $Q_{2R}$ and equal to 1 on $Q_R=Q'_i\cap \Pi_{p_i}M_0$ and setting $A_R=\{(x,\theta_i(x))\ :\ x\in Q_R\}$, $A_{2R}=\{(x,\theta_i(x))\ :\ x\in Q_{2R}\}$, we have
\begin{equation}\label{q1}
\Vert\BBB\Vert_{L^p(A_R)}^p=\int_{Q_R}|\BBB\comp\Theta|^p J\Theta\,dx
\leqslant\int_{Q_R}\rho^p|{\mathrm {Hess}}^{\R^n}\!\theta_i|^p\,dx=\int_{\R^n}|\rho{\mathrm {Hess}}^{\R^n}\!\theta_i|^p\,dx\,,
\end{equation}
as $\mu=J\Theta\,\mathscr{L}^n$ and $J\Theta=\sqrt{1+\vert\nabla^{\R^n}\!\theta_i\vert^2\,}$. Then, we estimate
\begin{align}
\int_{\R^n}|\rho{\mathrm {Hess}}^{\R^n}\!\theta_i|^p\,dx\leqslant&\,C \int_{\R^n}|{\mathrm {Hess}}^{\R^n}\!(\rho \theta_i)|^p\,dx
+C \int_{\R^n}|2\nabla^{\R^n}\!\rho\otimes\nabla^{\R^n}\! \theta_i|^p\,dx
+C \int_{\R^n}|\theta_i{\mathrm {Hess}}^{\R^n}\!\rho|^p\,dx\\
\leqslant&\,C \int_{\R^n}|{\mathrm {Hess}}^{\R^n}\!(\rho \theta_i)|^p\,dx+C\,,
\end{align}
where $C=C(M_0,p,\delta)$, as the last two integrals in the first line are clearly bounded by a constant $C=C(M_0,p,\delta)$.\\
Hence, by applying the standard Calder\'on--Zygmund estimates in $\R^n$ (see~\cite{GilbTrud}, for instance) to the last term above, we get
\begin{align}
\int_{\R^n}|\rho{\mathrm {Hess}}^{\R^n}\!\theta_i|^p\,dx\leqslant&\,C\int_{\R^n}|\Delta^{\R^n}\!(\rho \theta_i)|^p\,dx+C\\
\leqslant&\,C\int_{\R^n}|\rho\Delta^{\R^n}\! \theta_i|^p\,dx+C\int_{\R^n}|2\langle\nabla^{\R^n}\!\rho\,|\,\nabla^{\R^n}\! \theta_i\rangle|^p\,dx+C\int_{\R^n}|\theta_i\Delta^{\R^n}\!\rho|^p\,dx\\
\leqslant&\,C\int_{\R^n}\Bigl\vert-\rho(\HHH\comp\Theta)\sqrt{1+\vert\nabla^{\R^n}\! \theta_i\vert^2}+\frac{\rho{\mathrm {Hess}}^{\R^n}\!\theta_i(\nabla^{\R^n}\! \theta_i,\nabla^{\R^n}\! \theta_i)}{1+\vert\nabla^{\R^n}\! \theta_i\vert^2}\,\Bigl\vert^p\,dx+C\\
\leqslant&\,C\int_{\R^n}\vert\rho(\HHH\comp\Theta)\vert^p\,dx+C\int_{\R^n}\vert\rho{{\mathrm {Hess}}^{\R^n}\!\theta_i(\nabla^{\R^n}\! \theta_i,\nabla^{\R^n}\! \theta_i)}\vert^p\,dx+C\\
\leqslant&\,C\int_{\R^n}\vert\rho(\HHH\comp\Theta)\vert^p\,dx+C\int_{\R^n}|\nabla^{\R^n}\! \theta_i|^{2p}\vert\rho{\mathrm {Hess}}^{\R^n}\!\theta_i\vert^p\,dx+C
\end{align}
where the constant $C$ depends only on $M_0$, $p$ and $\delta$ (we estimated the last two integrals in the second line with such a constant, as we did above for the Hessian).\\
If $\delta>0$ is small enough, then $C|\nabla^{\R^n}\! \theta_i|^{2p}<1/2$ and we get
$$
\int_{\R^n}|\rho{\mathrm {Hess}}^{\R^n}\!\theta_i|^p\,dx\leqslant 2C\int_{\R^n}\vert\rho(\HHH\comp\Theta)\vert^p\,dx+2C\leqslant 2C\int_{Q_{2R}}\vert(\HHH\comp\Theta)\vert^p\,dx+2C
$$
which clearly implies, by formula~\eqref{q1},
\begin{equation}
\Vert\BBB\Vert_{L^p(A_R)}\leqslant C\int_{Q_{2R}}\vert(\HHH\comp\Theta)\vert^p\,dx+C\leqslant C\int_{Q_{2R}}\vert(\HHH\comp\Theta)\vert^pJ\Theta\,dx+C\leqslant C\big(1+\Vert\HHH\Vert_{L^p(A_{2R})}^p\big)\,,
\end{equation}
with $C=C(M_0,p,\delta)$.\\
Hence, by construction and invariance by isometry,
\begin{equation}
\Vert\BBB\Vert_{L^p(M\cap Q_i')}\leqslant C\big(1+\Vert\HHH\Vert_{L^p(M\cap Q_i)}^p\big)\leqslant C\big(1+\Vert\HHH\Vert_{L^p(M)}^p\big)\,.
\end{equation}
Since the number of hypercubes $Q_i'$ covering $M$ is fixed and $C=C(M_0,p,\delta)$, we obtain the thesis of the theorem.
\end{proof}

We have an analogous theorem for Schauder estimates, after defining appropriately the H\"older $C^{0,\alpha}$--norm of a tensor $T$ on $M$, that is,
$$
\Vert T\Vert_{C^{0,\alpha}(M)}=\sup_M\vert T\vert+[T]_{C^{0,\alpha}(M)}
$$
where we need to give a meaning to the seminorm $[T]_{C^{0,\alpha}(M)}$.\\

If $T$ is an $m$--form (hence, a covariant $m$--tensor), one possibility is to ``extend the action'' of the tensor $T$ from the bundle $\oplus^mTM$ of covariant $m$-–tensors on $M$ to the one of the whole ``ambient'' $\R^{n+1}$ by means of the orthogonal projection on the tangent bundle $TM$ (as we identify $T_xM$ with a vector subspace of $T_x \R^{n+1} \approx \R^{n+1}$, for every $x \in M$).
To give an example, if $T=\BBB$, letting $\pi_x:\R^{n+1}\to T_xM$ be the orthogonal projection on the tangent space of $M$, for every $x\in M$, we can define the ``extension'' of $\BBB$ (without relabeling it) by considering at every $x\in M$ the bilinear form $\BBB_x:\oplus^2T_x\R^{n+1}\approx\R^{n+1}\times \R^{n+1}\to\R$ as $\BBB_x(v,w)=\BBB_x(\pi_x(v),\pi_x(w))$. Extending analogously a general $m$--form $T$ from operating on $\oplus^mTM$ to $\oplus^mT\R^{n+1}$, its norm as a multilinear functional is unchanged at every point $x\in M$ and we can then consider its components $T_{j_1\dots j_m}$ in the canonical basis of $\R^{n+1}$ to define
$$
[T]_{C^{0,\alpha}(M)}=\sum_{j_1,\dots,j_m=1}^{n+1}[T_{j_1\dots j_m}]_{C^{0,\alpha}(M)}=\sum_{j_1,\dots,j_m=1}^{n+1}\sup_{{\genfrac{}{}{0pt}{}
{x,y\in M}{x\neq y}}}\frac{|T_{j_1\dots j_m}(x)-T_{j_1\dots j_m}(y)|}{\vert x- y \vert^\alpha}\,.
$$
Finally, if the tensor is of general type (it has also contravariant components), we ``transform'' it in a covariant one by means of the musical isomorphisms (see~\cite{gahula}, for instance) and then proceed as above. Anyway, in the following all the tensors will be covariant.

\begin{remark}\label{holder0}
This ``global'', partially coordinate--free definition (only the canonical coordinates of $\R^{n+1}$ are involved, not any coordinate chart for $M$) is useful in general, but in our special case of families of hypersurfaces which are representable as graphs on a fixed one, we can also consider an {\em equivalent} H\"older seminorm by means of the local description of $M$ with the hypercubes $Q_i$, which is more convenient for our computations. For any $m$--form $T$ on $M$, we set (in the notation of the proof of Theorem~\ref{GCZ0})
\begin{align}
[T]_{C^{0,\alpha}(V)}=&\,\sum_{j_1,\dots,j_m=1}^{n}[T_{j_1\dots j_m}\comp\Theta]_{C^{0,\alpha}(\Theta^{-1}(V))}\\
=&\,\sum_{j_1,\dots,j_m=1}^{n}\sup_{{\genfrac{}{}{0pt}{}
{x,y\in \Theta^{-1}(V)}{x\neq y}}}\frac{|T_{j_1\dots j_m}(\Theta(x))-T_{j_1\dots j_m}(\Theta(y))|}{\vert x- y \vert^\alpha}\,,\label{holder}
\end{align}
for every open set $V\subseteq M\cap Q_i$, where $T_{j_1\dots j_m}$ are the components of $T$ in the parametrization $x\mapsto\Theta(x)=x+\theta_i(x)e_{n+1}$. Then, we define
$$
[T]_{C^{0,\alpha}(M)}=\sum_{i=1}^k[T]_{C^{0,\alpha}(A_R)},
$$
by means of the finite family of sets $A_R$ (whose number is fixed) covering $M\in\mathfrak{C}^{1}_\delta(M_0)$. 
\end{remark}

\begin{thm}\label{SCH0}
Let $M_0\subseteq\R^{n+1}$ be a smooth, compact hypersurface, embedded in $\R^{n+1}$ and $\alpha\in(0,1]$. Then, if $\delta>0$ is small enough, there exists a constant $C(M_0,\alpha,\delta)$ such that the following geometric Schauder estimate holds,
\begin{equation}\label{SCH}
\Vert\BBB\Vert_{C^{0,\alpha}(M)}\leqslant C(M_0,\alpha,\delta)\,\big(1+\Vert \HHH\Vert_{C^{0,\alpha}(M)}\big)
\end{equation}
for every $M\in\mathfrak{C}^{1,\alpha}_\delta(M_0)$. 
\end{thm}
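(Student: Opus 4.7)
The plan is to mimic the proof of Theorem~\ref{GCZ0} but replace the Calder\'on--Zygmund estimate in $\R^n$ with the classical interior Schauder estimate, and then bootstrap the non-linear mean curvature equation by exploiting the smallness of $\|\nabla^{\R^n}\!\theta_i\|_{C^{0,\alpha}}$ when $\delta$ is small. The setup is identical: local parametrizations $\Theta(x)=x+\theta_i(x)\nu_i$ over the hypercubes $Q_{2R}\subseteq \Pi_{p_i}M_0=\R^n$, with $\|\theta_i\|_{C^{1,\alpha}(\R^n)}\leqslant 2\delta$ now (since $M\in\mathfrak{C}^{1,\alpha}_\delta(M_0)$), a cut--off $\rho$ supported in $Q_{2R}$ and equal to $1$ on $Q_R$, and the pieces $A_R, A_{2R}$. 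Using Remark~\ref{holder0}, it suffices to prove, for each $i$,
\begin{equation*}
\sup_{A_R}|\BBB|+[\BBB]_{C^{0,\alpha}(A_R)}\leqslant C(M_0,\alpha,\delta)\,\bigl(1+\sup_M|\HHH|+[\HHH]_{C^{0,\alpha}(M)}\bigr),
\end{equation*}
which after summing over the finite collection yields~\eqref{SCH}.

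The first step is to view~\eqref{meancurvature} as a semilinear Poisson equation for $\theta_i$,
\begin{equation*}
-\Delta^{\R^n}\!\theta_i=\sqrt{1+|\nabla^{\R^n}\!\theta_i|^2}\,(\HHH\comp\Theta)-\frac{\operatorname{Hess}^{\R^n}\!\theta_i(\nabla^{\R^n}\!\theta_i,\nabla^{\R^n}\!\theta_i)}{1+|\nabla^{\R^n}\!\theta_i|^2}=:F_i,
\end{equation*}
and apply interior Schauder estimates (see~\cite{GilbTrud}) to $\rho\theta_i$, which is compactly supported in $Q_{2R}$. Writing $\Delta^{\R^n}\!(\rho\theta_i)=\rho\Delta^{\R^n}\!\theta_i+2\langle \nabla^{\R^n}\!\rho\,|\,\nabla^{\R^n}\!\theta_i\rangle+\theta_i\Delta^{\R^n}\!\rho$, the second and third terms contribute only a constant $C(M_0,\alpha,\delta)$ in $C^{0,\alpha}(\R^n)$, since $\rho$ is fixed and $\|\theta_i\|_{C^{1,\alpha}}\leqslant 2\delta$. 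One obtains
\begin{equation*}
[\operatorname{Hess}^{\R^n}\!\theta_i]_{C^{0,\alpha}(Q_R)}+\sup_{Q_R}|\operatorname{Hess}^{\R^n}\!\theta_i|\leqslant C\bigl(\|F_i\|_{C^{0,\alpha}(Q_{2R})}+1\bigr).
\end{equation*}

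The key step is then estimating $\|F_i\|_{C^{0,\alpha}(Q_{2R})}$. The product/quotient/composition rules for the $C^{0,\alpha}$--norm, combined with $\|\theta_i\|_{C^{1,\alpha}}\leqslant 2\delta$ and the uniform bound $1\leqslant\sqrt{1+|\nabla^{\R^n}\!\theta_i|^2}\leqslant 1+2\delta$, give
\begin{equation*}
\|F_i\|_{C^{0,\alpha}(Q_{2R})}\leqslant C(M_0,\alpha,\delta)\bigl(\|\HHH\comp\Theta\|_{C^{0,\alpha}(Q_{2R})}+1\bigr)+C\delta^{2}\,[\operatorname{Hess}^{\R^n}\!\theta_i]_{C^{0,\alpha}(Q_{2R})}+C\delta,
\end{equation*}
where the factor $\delta^2$ comes from the two copies of $\nabla^{\R^n}\!\theta_i$ in the quadratic term. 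A brief bi--Lipschitz/Hölder change of variables argument (using that $\Theta$ and $\Theta^{-1}$ are $C^{0,\alpha}$--close to isometries, depending on $M_0$ and $\delta$) shows $\|\HHH\comp\Theta\|_{C^{0,\alpha}(Q_{2R})}\leqslant C(M_0,\alpha,\delta)\|\HHH\|_{C^{0,\alpha}(M)}$. To close the argument, I would iterate this estimate on a nested family of cut-offs, or more simply apply it on $Q_{2R}$ with a slightly larger $\rho$: if $\delta$ is small enough that $C\delta^2<1/2$, the $[\operatorname{Hess}^{\R^n}\!\theta_i]_{C^{0,\alpha}}$--term on the right can be absorbed into the left side of the Schauder inequality.

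Finally, from~\eqref{secondform+} and the same composition/quotient rules,
\begin{equation*}
\|\BBB\comp\Theta\|_{C^{0,\alpha}(\Theta^{-1}(A_R))}\leqslant C(M_0,\alpha,\delta)\bigl(\sup_{Q_R}|\operatorname{Hess}^{\R^n}\!\theta_i|+[\operatorname{Hess}^{\R^n}\!\theta_i]_{C^{0,\alpha}(Q_R)}+1\bigr),
\end{equation*}
which by the previous step is bounded by $C(M_0,\alpha,\delta)(1+\|\HHH\|_{C^{0,\alpha}(M)})$, and summing on $i$ yields~\eqref{SCH}. The main obstacle is the careful Hölder bookkeeping in the non-linear term ${\operatorname{Hess}^{\R^n}\!\theta_i(\nabla^{\R^n}\!\theta_i,\nabla^{\R^n}\!\theta_i)}/({1+|\nabla^{\R^n}\!\theta_i|^2})$: the coefficient in front of $[\operatorname{Hess}^{\R^n}\!\theta_i]_{C^{0,\alpha}}$ must be $O(\delta^2)$ (not $O(\delta)$ or $O(1)$) so that it can be absorbed by the Schauder inequality for all sufficiently small $\delta$, and that is precisely what the two copies of $\nabla^{\R^n}\!\theta_i$ provide.
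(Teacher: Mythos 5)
Your strategy is the same as the paper's (local graphs over the hyperplanes $\Pi_{p_i}M_0$, interior Schauder estimates in $\R^n$ applied to the equation $\Delta^{\R^n}\theta_i=-(\HHH\comp\Theta)\sqrt{1+|\nabla^{\R^n}\theta_i|^2}+\mathrm{Hess}^{\R^n}\theta_i(\nabla^{\R^n}\theta_i,\nabla^{\R^n}\theta_i)/(1+|\nabla^{\R^n}\theta_i|^2)$, the $O(\delta^2)$ coefficient in front of the Hessian H\"older norm, and the change of variables for $\HHH\comp\Theta$), and those estimates are fine. The problem is the closing absorption step. After the Schauder inequality, the quantity you control on the left is $[\mathrm{Hess}^{\R^n}\theta_i]_{C^{0,\alpha}}$ on the \emph{smaller} set (where the cut--off equals $1$, or the interior cube $Q_R$), while the nonlinear term on the right carries $[\mathrm{Hess}^{\R^n}\theta_i]_{C^{0,\alpha}}$ on the \emph{larger} set $Q_{2R}$ (the support of $\rho$). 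A small coefficient $C\delta^2$ does not repair a domain mismatch: you cannot absorb a seminorm over $Q_{2R}$ into one over $Q_R$, and ``applying it on $Q_{2R}$ with a slightly larger $\rho$'' only shifts the same mismatch to another pair of nested sets. Your other suggestion, iterating over a nested family of cut--offs, can be made rigorous, but it is not automatic: the interior Schauder constant blows up as the nested cubes approach each other, so the coefficient in front of the larger--cube seminorm is not uniformly below $1$ along the iteration, and one needs the standard iteration/absorption lemma (or weighted interior Schauder norms) to conclude. As written, this step is a genuine gap.

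The paper closes the loop differently and more simply: using~\eqref{secondform+} in reverse, $\mathrm{Hess}^{\R^n}\theta_i=-(\BBB\comp\Theta)\sqrt{1+|\nabla^{\R^n}\theta_i|^2}$, so the offending term $C\delta^2\Vert\theta_i\Vert_{C^{2,\alpha}(Q_{2R})}$ is bounded by $C\delta^2\Vert\BBB\Vert_{C^{0,\alpha}(M)}+C$, i.e.\ by the \emph{global} H\"older norm of $\BBB$. Summing the finitely many local estimates over the fixed cover then gives
$\Vert\BBB\Vert_{C^{0,\alpha}(M)}\leqslant C\Vert\HHH\Vert_{C^{0,\alpha}(M)}+C\delta^2\Vert\BBB\Vert_{C^{0,\alpha}(M)}+C$,
and since $\Vert\BBB\Vert_{C^{0,\alpha}(M)}$ is finite for each fixed smooth $M$, the term $C\delta^2\Vert\BBB\Vert_{C^{0,\alpha}(M)}$ can be absorbed on the left once $\delta$ is small (with $C$ chosen monotone in $\delta$). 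If you replace your local absorption by this ``convert back to $\BBB$, sum over the cover, absorb globally'' step, the rest of your argument goes through essentially verbatim.
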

\begin{proof}
In the same setting and notation of the proof of Theorem~\ref{GCZ0}, for every hypercube $Q_i$, the function $\theta_i$ belongs to $C^{1,\alpha}(Q_{2R})$, with $\Vert{\theta_i}\Vert_{C^{1,\alpha}(Q_{2R})}\leqslant2\delta$. Then, keeping into account Remark~\ref{holder0}, we deal with $\Vert\BBB\Vert_{C^{0,\alpha}(A_R)}$, which satisfies
\begin{equation}\label{q2}
\Vert\BBB\Vert_{C^{0,\alpha}(A_R)}=\Vert\BBB\comp\Theta\Vert_{C^{0,\alpha}(Q_R)}=\biggl\Vert\frac{{\mathrm{Hess}}^{\R^n}\!\theta_i}{\sqrt{1+\vert\nabla^{\R^n}\! \theta_i\vert^2}}\biggr\Vert_{C^{0,\alpha}(Q_R)}\leqslant C\,\Vert \theta_i\Vert_{C^{2,\alpha}(Q_R)}\,,
\end{equation}
by equality~\eqref{secondform+} and since $Q_R=\Theta^{-1}(A_R)$, by construction.\\
Hence, by the standard Schauder estimates in $Q_{2R}=\Theta^{-1}(A_{2R})$ (see~\cite{GilbTrud}, for instance), we get
\begin{align}
&\Vert{\theta_i}\Vert_{C^{2,\alpha}(Q_R)}\\
&\leqslant C\,\Vert\Delta^{\R^n}\! \theta_i\Vert_{C^{0,\alpha}(Q_{2R})}+C\Vert{\theta_i}\Vert_{C^{1,\alpha}(Q_{2R})}\\
&\leqslant C\,\biggl\Vert -(\HHH\comp\Theta)\sqrt{1+\vert\nabla^{\R^n}\! \theta_i\vert^2}+\frac{{\mathrm {Hess}}^{\R^n}\!\theta_i(\nabla^{\R^n}\! \theta_i,\nabla^{\R^n}\! \theta_i)}{1+\vert\nabla^{\R^n}\! \theta_i\vert^2}\,\biggl\Vert_{C^{0,\alpha}(Q_{2R})}+\,C\\
&\leqslant C\,\Vert \HHH\comp\Theta\Vert_{C^{0,\alpha}(Q_{2R})}+C\,\Vert\nabla^{\R^n}\! \theta_i\Vert^2_{C^{0,\alpha}(Q_{2R})}\Vert{\mathrm {Hess}}^{\R^n}\!\theta_i\Vert_{C^{0,\alpha}(Q_{2R})}+C\\
&\leqslant C\,\Vert \HHH\comp\Theta\Vert_{C^{0,\alpha}(Q_{2R})}+C\delta^2\Vert \theta_i\Vert_{C^{2,\alpha}(Q_{2R})}+C\,,
\end{align}
where the constant $C$ depends only on $M_0$, $\alpha$ and $\delta$, as $\Vert{\theta_i}\Vert_{C^{1,\alpha}(Q_{2R})}\leqslant2\delta$. This estimate clearly implies, by formula~\eqref{q2} and equality~\eqref{secondform+},
\begin{equation}
\Vert\BBB\Vert_{C^{0,\alpha}(A_R)}\leqslant C\,\Vert \HHH\Vert_{C^{0,\alpha}(M)}+C\delta^2\Vert \BBB\Vert_{C^{0,\alpha}(M)}+C
\end{equation}
and since the family of sets $A_R$ covering $M\in\mathfrak{C}^{1}_\delta(M_0)$ is finite and its number is fixed, we conclude
\begin{equation}
\Vert\BBB\Vert_{C^{0,\alpha}(M)}\leqslant C\,\Vert \HHH\Vert_{C^{0,\alpha}(M)}+C\delta^2\Vert \BBB\Vert_{C^{0,\alpha}(M)}+C\,,
\end{equation}
with a constant $C$ depending only on $M_0$, $\alpha$ and $\delta$ (and we can clearly choose $C$ to be monotonically increasing with $\delta$).\\
Then, if $\delta>0$ is small enough, we have $C\delta^2\,\Vert\BBB\Vert^2_{C^{0,\alpha}(M)}<\Vert\BBB\Vert^2_{C^{0,\alpha}(M)}/2$, hence we get
\begin{equation}
\Vert\BBB\Vert_{C^{0,\alpha}(M)}\leqslant 2C\,\Vert \HHH\Vert_{C^{0,\alpha}(M)}+2C\,,
\end{equation}
that is,
\begin{equation}
\Vert\BBB\Vert_{C^{0,\alpha}(M)}\leqslant C\,\big(1+\Vert \HHH\Vert_{C^{0,\alpha}(M)}\big)\,,
\end{equation}
where the constant $C$ depends only on $M_0$, $\alpha$ and $\delta$, which is the thesis of the theorem.
\end{proof}

We now deal with families of $n$--dimensional graph hypersurfaces in $M\in\mathfrak{C}^1_\delta(M_0)$ over $M_0$ with a uniform bound $\Vert\BBB\Vert_{L^\infty(M)}$ on the second fundamental form.

Arguing again in the same setting and notation of the proof of Theorem~\ref{GCZ0}, for $p\in(1,+\infty)$ and any $C^2$--function $u:M\to\R$ (or $u\in W^{2,p}(M)$), we have
\begin{equation}
\norma{\nabla^2 u}_{L^p(M)} \leqslant C\sum_{i=1}^k \norma{\nabla^2 (u \rho_i)}_{L^p(M \cap Q_i)}\label{cz1bis}
\end{equation}
(here $\nabla$ is the Levi--Civita connection of $M$) and, for every $C^2$ function $v:M\to\R$, with compact support in $M\cap Q_i$, there holds
\begin{align}
\int_{M\cap Q_i}\vert \nabla^2v(y)\vert^p\,d\mu(y)
=&\,\int_{\R^n}\big\vert (\nabla^2v)(x+\theta_i(x)\nu_i)\big\vert^p J\Theta(x)\,dx\\
\leqslant&\,C(\delta)\int_{\R^n}\big\vert (\nabla^2v)(x+\theta_i(x)\nu_i)\big\vert^p\,dx\,,\label{eqcar999}
\end{align}
as $J\Theta=\sqrt{1+|\nabla^{\R^n}\!\theta_i|^2\,}\leqslant1+2\delta$.\\
In the coordinates given by the parametrization $\Theta$, the coefficients of the metric $g$ of $M$ (induced by $\R^{n+1}$) in $M\cap Q_i$ are 
$$
g_{\ell m}(\Theta(x))=\delta_{\ell m}+\frac{\pa \theta_i}{\pa x_\ell}(x)\frac{\pa \theta_i}{\pa x_m}(x)\,,
$$
hence, they and the ones of the inverse matrix are bounded by a constant depending only on $M_0$ and $\delta$. By formula~\eqref{Chr}, the Christoffel symbols  of the Levi--Civita connection $\nabla$ satisfy
\begin{equation}\label{eqcar888}
|\Gamma^{s}_{\ell m}(\Theta(x))|\leqslant C\sum_{p,q,r=1}^n\Big|\frac{\partial (g_{pq}\comp\Theta)}{\partial x_r}(x)\Big|=C\sum_{p,q,r=1}^n\Big|\frac{\pa^2 \theta_i}{\pa x_r\pa x_p}(x)\frac{\pa \theta_i}{\pa x_q}(x)\Big|\,.
\end{equation}
Then, recalling the first formula~\eqref{GW}, 
\begin{align}
\Big\vert\frac{\pa^2\theta_i}{\pa x_\ell\pa x_m}(x)\Big\vert=&\,\Big\vert\frac{\pa^2\Theta}{\pa x_\ell\pa x_m}(x)\Big\vert\\
=&\,\Big\vert\Gamma_{\ell m}^s(\Theta(x))\frac{\pa\Theta}{\pa x_s}(x)-\BBB_{\ell m}(\Theta(x))\nu(\Theta(x))\Big\vert\\
\leqslant&\,C\vert\Gamma_{\ell m}^s(\Theta(x))\vert\,\Big\vert\frac{\pa\Theta}{\pa x_s}(x)\Big\vert+|\BBB_{\ell m}(\Theta(x))|\\
\leqslant&\,C\vert\mathrm{Hess}^{\R^n}\!\theta_i(x)\vert\,\vert\nabla^{\R^n}\!\theta_i(x)\vert\,(1+\vert\nabla^{\R^n}\!\theta_i(x)\vert)+|\BBB(\Theta(x))|\,,\label{HessBBB}
\end{align}
where in the last passage we estimated the Christoffel symbols by means of inequality~\eqref{eqcar888}. As $\vert\nabla^{\R^n}\!\theta_i\vert\leqslant2\delta$, we conclude
$$
|\mathrm{Hess}^{\R^n}\!\theta_i(x)|\leqslant C|\mathrm{Hess}^{\R^n}\!\theta_i(x)|\,|\nabla^{\R^n}\!\theta_i(x)|+C\vert\BBB(\Theta(x))\vert\leqslant C|\mathrm{Hess}^{\R^n}\!\theta_i(x)|\delta+C\vert\BBB(\Theta(x))\vert
$$
with a constant $C$ depending only on $\delta$, which implies, if $\delta$ is smaller than $1/2C$, the estimate
\begin{equation}\label{HB}
|\mathrm{Hess}^{\R^n}\!\theta_i(x)|\leqslant 2C(M_0,\delta)\vert\BBB(\Theta(x))\vert\,,
\end{equation}
for every $x\in Q_i\cap\Pi_{p_i}M\subseteq\R^n$.\\
By the first formula~\eqref{eqcar888}, it follows
\begin{equation}\label{GammaB}
|\Gamma_{\ell m}^s(\Theta(x))|\leqslant C|\mathrm{Hess}^{\R^n}\!\theta_i(x)|\,\vert\nabla^{\R^n}\!\theta_i\vert\leqslant C\delta\vert\BBB(\Theta(x))\vert
\end{equation}
with $C=C(\delta)$, then computing schematically, we have
\begin{equation}\label{hesseq}
(\nabla^2v)(\Theta(x))={\mathrm{Hess}}^{\R^n}\!(v\comp\Theta)(x)-\Gamma(\Theta(x))\star\nabla^{\R^n}\!(v\comp\Theta)(x)\,,
\end{equation}
hence,
$$
|(\nabla^2v)(\Theta(x))|\leqslant C|{\mathrm{Hess}}^{\R^n}(v\comp\Theta)(x)|+C\delta\vert\BBB(\Theta(x))\vert\,\vert\nabla^{\R^n}(v\comp\Theta)(x)\vert\,.
$$
Applying the Calder\'on--Zygmund inequality in $\R^n$, we get
\begin{align}
\int_{\R^n}\big\vert (\nabla^2v)(x+\theta_i(x)\nu_i)\big\vert^p\,dx
\leqslant&\,C\int_{\R^n}|{\mathrm{Hess}}^{\R^n}[v(x+\theta_i(x)\nu_i)]|^p\,dx\\
&\,+C\delta\int_{\R^n}\vert\BBB(\Theta(x))\vert^p\,\vert\nabla^{\R^n} [v(x+\theta_i(x)\nu_i])\vert^p\,dx\\
\leqslant&\,C\int_{\R^n}|\Delta^{\R^n}[v(x+\theta_i(x)\nu_i)]|^p\,dx\\
&\,+C(\delta)\int_{\R^n}\vert\BBB(\Theta(x))\vert^p\,\vert\nabla v(\Theta(x))\vert^p\,dx\,.\\
\leqslant&\,C\int_{\R^n}|\Delta^{\R^n}[v(x+\theta_i(x)\nu_i)]|^p\,dx\\
&\,+C(\delta)\int_{M\cap Q_i}\vert\BBB(y)\vert^p\vert\nabla v(y)\vert^p\,d\mu(y)\,,\label{tangent3}
\end{align}
arguing as in estimate~\eqref{tangent2} to get the last inequality.\\
Contracting equation~\eqref{hesseq} with the inverse of the metric and estimating, we have 
$$
|\Delta^{\R^n}(v\comp\Theta)(x)|\leqslant C|(\Delta v)(\Theta(x))|+C\delta|(\BBB\comp\Theta)(x)|\,|\nabla^{\R^n}(v\comp\Theta)(x)|\,
$$
thus, by inequalities~\eqref{eqcar999} and~\eqref{tangent3}, we obtain
\begin{align}
\int_{M\cap Q_i}\vert \nabla^2v(y)\vert^p\,d\mu(y)
\leqslant&\,C\int_{\R^n}|(\Delta v)(x+\theta_i(x)\nu_i)|^p\,dx+C\int_{M\cap Q_i}\vert\BBB(y)\vert^p\vert\nabla v(y)\vert^p\,d\mu(y)\\
\leqslant&\,C\int_{M\cap Q_i}|\Delta v(y)|^p\,d\mu(y)+C\int_{M\cap Q_i}\vert\BBB(y)\vert^p\vert\nabla v(y)\vert^p\,d\mu(y)\,,
\end{align}
with $C=C(M_0,\p,\delta)$, arguing again as above.\\
Getting back to inequality~\eqref{cz1bis}, we conclude
\begin{align}
\norma{\nabla^2 u}_{L^p(M)}^p \leqslant &\,C\sum_{i=1}^k \norma{\nabla^2 (u \rho_i)}_{L^p(M \cap Q_i)}^p\\
\leqslant &\,C\sum_{i=1}^k \int_{M\cap Q_i}|\Delta (u\rho_i)|^p\,d\mu+C\int_{M\cap Q_i}\vert\BBB\vert^p\vert\nabla (u\rho_i)\vert^p\,d\mu\\
\leqslant &\,C\sum_{i=1}^k \int_{M\cap Q_i}|\Delta u|^p\,d\mu+C\int_{M\cap Q_i}\big(|u|^p+\vert\nabla u\vert^p\big)\,d\mu\\
\leqslant &\,C\int_{M}|\Delta u|^p\,d\mu+C\int_{M}\big(|u|^p+\vert\nabla u\vert^p\big)\,d\mu\,,\label{Bmod}
\end{align}
with $C=C(M_0,p,\delta,\Vert\BBB\Vert_{L^\infty(M)})$.
Interpolating the integral of $\vert\nabla u\vert^p$ between $\norma{\nabla^2 u}_{L^p(M)}$ and $\norma{u}_{L^p(M)}$ by means of the uniform Gagliardo--Nirenberg inequalities of the previous section, we obtain the following theorem.

\begin{thm}\label{GCZ1}
Let $M_0\subseteq\R^{n+1}$ be a smooth, compact hypersurface, embedded in $\R^{n+1}$ and $p\in(1,+\infty)$. Then, if $\delta>0$ is small enough, there exists a constant $C$ which depends only on $M_0$, $p$, $\delta$ and $\Vert\BBB\Vert_{L^\infty(M)}$ such that the following Calder\'on--Zygmund inequality holds,
\begin{equation}\label{c1}
\norma{\nabla^2 u}_{L^p(M)}\leqslant C\norma{\Delta u}_{L^p(M)}+C \norma{u}_{L^p(M)}
\end{equation}
hence,
\begin{equation}\label{c2}
\norma{u}_{W^{2,p}(M)}\leqslant C\norma{\Delta u}_{L^p(M)}+C \norma{u}_{L^p(M)}\,,
\end{equation}
for every hypersurface $M\in\mathfrak{C}^{1}_\delta(M_0)$ and $u\in W^{2,p}(M)$.
\end{thm}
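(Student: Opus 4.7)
The plan is to use the estimate~\eqref{Bmod}, which has already been derived in the computations preceding the theorem statement, as the starting point. That estimate gives
\begin{equation*}
\norma{\nabla^2 u}_{L^p(M)}^p \leqslant C\norma{\Delta u}_{L^p(M)}^p + C\norma{\nabla u}_{L^p(M)}^p + C\norma{u}_{L^p(M)}^p\,,
\end{equation*}
with $C = C(M_0,p,\delta,\Vert\BBB\Vert_{L^\infty(M)})$, so the only remaining task is to handle the intermediate term $\norma{\nabla u}_{L^p(M)}^p$ on the right-hand side.

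To do this I would invoke the uniform Gagliardo--Nirenberg interpolation inequality~\eqref{interpGN} established in point~\ref{item5} of the theorem in Section~\ref{SobSec}, applied with $j = 1$, $m = 2$, $r = q = p$, which forces $\theta = 1/2$ and yields
\begin{equation*}
\norma{\nabla u}_{L^p(M)} \leqslant C\bigl(\norma{\nabla^2 u}_{L^p(M)} + \norma{u}_{L^p(M)}\bigr)^{1/2}\norma{u}_{L^p(M)}^{1/2}\,,
\end{equation*}
with a constant $C = C(M_0,p,\delta)$, uniform on $\mathfrak{C}^1_\delta(M_0)$. A standard application of Young's inequality $ab \leqslant \eta a^2 + \frac{1}{4\eta}b^2$ then gives, for every $\eta > 0$,
\begin{equation*}
\norma{\nabla u}_{L^p(M)}^p \leqslant \eta\,\norma{\nabla^2 u}_{L^p(M)}^p + C_\eta\,\norma{u}_{L^p(M)}^p\,,
\end{equation*}
where $C_\eta$ depends on $\eta$, $M_0$, $p$, $\delta$.

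Plugging this back into~\eqref{Bmod} and choosing $\eta$ sufficiently small so that $C\eta < 1/2$, the term $C\eta\,\norma{\nabla^2 u}_{L^p(M)}^p$ can be absorbed into the left-hand side, yielding the Calder\'on--Zygmund inequality~\eqref{c1}. The stronger form~\eqref{c2} then follows immediately: by definition $\norma{u}_{W^{2,p}(M)}^p = \norma{u}_{L^p(M)}^p + \norma{\nabla u}_{L^p(M)}^p + \norma{\nabla^2 u}_{L^p(M)}^p$, and the $\norma{\nabla u}_{L^p(M)}^p$ contribution is again controlled by the same interpolation step, so every piece is bounded by $C\norma{\Delta u}_{L^p(M)}^p + C\norma{u}_{L^p(M)}^p$. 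There is no real obstacle here: the hard work has already been done in establishing~\eqref{Bmod} and the uniform Gagliardo--Nirenberg bounds, and the present step is the routine ``absorption'' argument that converts the estimate with the lower-order gradient term on the right into the cleaner bound involving only $\Delta u$ and $u$.
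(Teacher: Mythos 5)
Your proposal is correct and follows essentially the same route as the paper: the paper's proof of Theorem~\ref{GCZ1} consists precisely of the chain of estimates culminating in~\eqref{Bmod}, followed by interpolating $\norma{\nabla u}_{L^p(M)}$ between $\norma{\nabla^2 u}_{L^p(M)}$ and $\norma{u}_{L^p(M)}$ via the uniform Gagliardo--Nirenberg inequalities and absorbing the second-derivative term. Your choice $j=1$, $m=2$, $r=q=p$, $\theta=1/2$ in~\eqref{interpGN}, together with Young's inequality and the absorption (legitimate since $u\in W^{2,p}(M)$), is exactly the intended argument.
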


\begin{remark}\label{GCZ1bis}
Notice that if $p<n$, we can modify the chain of inequalities~\eqref{Bmod} as follows,
\begin{align}
\norma{\nabla^2 u}_{L^p(M)}^p \leqslant &\,C\sum_{i=1}^k \norma{\nabla^2 (u \rho_i)}_{L^p(M \cap Q_i)}^p\\
\leqslant &\,C\sum_{i=1}^k \int_{M\cap Q_i}|\Delta (u\rho_i)|^p\,d\mu+C\int_{M\cap Q_i}\vert\BBB\vert^p\vert\nabla (u\rho_i)\vert^p\,d\mu\\
\leqslant &\,C\sum_{i=1}^k \int_{M\cap Q_i}|\Delta (u\rho_i)|^p\,d\mu\\
&\,+C\Bigl(\int_{M\cap Q_i}\vert\BBB\vert^n\,d\mu\Bigr)^{p/n}\Bigl(\int_{M\cap Q_i}
\vert\nabla (u\rho_i)\vert^{np/(n-p)}\,d\mu\Bigr)^{(n-p)/n}\\
\leqslant &\,C\sum_{i=1}^k \int_{M\cap Q_i}|\Delta (u\rho_i)|^p\,d\mu+C\Vert\BBB\Vert_{L^n(M\cap Q_i)}^p\norma{\nabla^2 (u \rho_i)}_{L^p(M \cap Q_i)}^p\,.
\end{align}
Hence, arguing as before, it is easy to conclude that inequalities~\eqref{c1} and~\eqref{c2} hold with a constant $C=C(M_0,p,\delta,\Vert\BBB\Vert_{L^n(M)})$, if $\delta>0$ is small enough. Moreover, since we have seen in Theorem~\ref{GCZ0} that a control on $\Vert\HHH\Vert_{L^n(M)}$ implies a control on $\Vert\BBB\Vert_{L^n(M)}$, we have uniform Calder\'on--Zygmund inequalities for families of $n$--dimensional graph hypersurfaces over $M_0$, with mean curvature uniformly bounded  in $L^n(M)$.
\end{remark}

With a similar argument, computing as in Theorem~\ref{SCH0}, we have analogous Schauder estimates for $C^{2,\alpha}$ functions $u:M\to\R$, with $M\in\mathfrak{C}^{1,\alpha}_\delta(M_0)$ and $\delta>0$ small enough, 
\begin{equation}\label{cc2}
\norma{u}_{C^{2,\alpha}(M)}\leqslant C\norma{\Delta u}_{C^{0,\alpha}(M)}+C \norma{u}_{C^{0,\alpha}(M)}\,,
\end{equation}
where the constant $C$ depends only on $M_0$, $\alpha\in(0,1]$, $\delta$ and $\Vert\BBB\Vert_{C^{0,\alpha}(M)}$ (or $\Vert\HHH\Vert_{C^{0,\alpha}(M)}$, by Theorem~\ref{SCH0}).

\begin{remark}\label{tenrem}
Localizing and computing in coordinates (see Remark~\ref{holder0}), it is easy to generalize estimates~\eqref{c1},~\eqref{c2} and~\eqref{cc2} also to tensors, under the same hypotheses. The same holds also for all the estimates of the previous section (see~\cite{mant5} for an example of how this can be done).
\end{remark}

\subsection{Geometric higher order Calder\'on--Zygmund estimates}\ 

We let $M_0$ as above and $p>1$, we want now to deal with $\norma{\nabla^k\BBB}_{L^p(M)}$, assuming that we have a uniform bound on $\Vert\HHH\Vert_{L^q(M)}$ with $q>n$, where $M$ is any $n$--dimensional graph hypersurface over $M_0$ in $\mathfrak{C}^1_\delta(M_0)$, if $\delta>0$ is small enough.

\begin{thm}\label{GCZ2}
Let $M_0\subseteq\R^{n+1}$ be a smooth, compact hypersurface, embedded in $\R^{n+1}$. Then, for any $q>n$, if $\delta>0$ is small enough, there exists a constant $C$ which depends only on $M_0$, $p$, $q$, $\delta$ and $\Vert\HHH\Vert_{L^q(M)}$, such that the following geometric higher order Calder\'on--Zygmund inequality holds, for $p\in(1,n)$,
$$
\norma{\nabla^k\BBB}_{L^p(M)}\leqslant C\big(1+\norma{\nabla^k\HHH}_{L^p(M)}\big)
$$
hence,
\begin{equation}\label{CZG2}
\norma{\BBB}_{W^{k,p}(M)} \leqslant C\big(1+ \norma{\HHH}_{W^{k,p}(M)}\big)\,,
\end{equation}
for any hypersurface $M\in \mathfrak{C}^1_\delta(M_0)$ and $k\in\N$.\\
Moreover, the same inequalities hold for any $p\in(1,+\infty)$ with a constant $C$ depending only on $M_0$, $p$, $\delta$ and $\Vert\BBB\Vert_{L^\infty(M)}$.
\end{thm}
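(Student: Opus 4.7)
The plan is to proceed by induction on $k\in\N$. The base case $k=0$ is Theorem~\ref{GCZ0}; applying it at exponent $q>n$ additionally yields a uniform bound on $\Vert\BBB\Vert_{L^q(M)}$ depending only on $M_0,q,\delta,\Vert\HHH\Vert_{L^q(M)}$, which will serve as an auxiliary bound throughout. The inductive hypothesis at step $k$ is that $\Vert\nabla^j\BBB\Vert_{L^p(M)}\leqslant C(1+\Vert\nabla^j\HHH\Vert_{L^p(M)})$ for all $j\leqslant k-1$ and all admissible $p$, with constants depending only on the listed quantities.

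For the inductive step I would localize exactly as in the proof of Theorem~\ref{GCZ0}, writing each piece $M\cap Q_i$ as the graph of $\theta_i:Q_{2R}\to\R$ with $\Vert\theta_i\Vert_{C^1}\leqslant 2\delta$. Solving for the Laplacian in the mean curvature formula~\eqref{secondform+} gives
\begin{equation*}
\Delta^{\R^n}\theta_i=-(\HHH\comp\Theta)\sqrt{1+|\nabla^{\R^n}\theta_i|^2}+\frac{{\mathrm{Hess}}^{\R^n}\theta_i(\nabla^{\R^n}\theta_i,\nabla^{\R^n}\theta_i)}{1+|\nabla^{\R^n}\theta_i|^2}\,.
\end{equation*}
Taking $k$ Euclidean derivatives of this identity yields, schematically,
\begin{equation*}
\nabla^{k}\Delta^{\R^n}\theta_i=-(\nabla^{k}\HHH\comp\Theta)\sqrt{1+|\nabla\theta_i|^2}+O(|\nabla\theta_i|^2)\cdot\nabla^{k+2}\theta_i+\mathcal{R}_k\,,
\end{equation*}
where $\mathcal{R}_k$ is a sum of multilinear ``mixed'' terms of the form $(\nabla^{a_1}\theta_i)\cdots(\nabla^{a_m}\theta_i)(\nabla^{b}\HHH\comp\Theta)$ with $a_j\geqslant 1$, $b\leqslant k-1$ and total order of differentiation $\leqslant k+1$. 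Applying the standard $\R^n$ Calder\'on--Zygmund estimate to a cut-off of $\theta_i$ (as in Theorem~\ref{GCZ0}) then gives
\begin{equation*}
\Vert\nabla^{k+2}\theta_i\Vert_{L^p(Q_R)}\leqslant C\Vert\nabla^k\Delta^{\R^n}\theta_i\Vert_{L^p(Q_{2R})}+C\Vert\theta_i\Vert_{W^{k+1,p}(Q_{2R})}\,.
\end{equation*}

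The key step is to control the right-hand side in $L^p$. For $\delta$ small the contribution $O(|\nabla\theta_i|^2)\nabla^{k+2}\theta_i$ is absorbed into the left-hand side exactly as in Theorem~\ref{GCZ0}, while the leading term produces $C\Vert\nabla^k\HHH\Vert_{L^p(M)}$. For each multilinear product in $\mathcal{R}_k$ I would combine H\"older's inequality with: (a) the inductive hypothesis, converted via the identity above into $L^r$-control on $\nabla^{j+2}\theta_i$ by $\nabla^j\HHH$ for $j\leqslant k-1$; (b) the base case applied at exponent $q$ together with the Sobolev embedding $W^{1,q}\hookrightarrow L^\infty$ for $q>n$, which after one bootstrap step yields an $L^\infty$ bound on the lowest-order derivatives of $\BBB$ (equivalently of $\mathrm{Hess}\theta_i$); (c) the Gagliardo--Nirenberg interpolation inequalities of Section~\ref{SobSec}, whose constants are uniform on $\mathfrak{C}^1_\delta(M_0)$, to distribute integrability exponents among the remaining factors. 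Translating an $L^p$ bound on $\nabla^{k+2}\theta_i$ back into one on $\nabla^k\BBB$ is then routine: expanding the covariant derivative via Gauss--Weingarten~\eqref{GW} introduces Christoffel symbols which, by~\eqref{eqcar888}, are polynomial expressions in $\nabla\theta_i$ and $\mathrm{Hess}\theta_i$, falling into the same scheme. Summing over the finite family of patches $M\cap Q_i'$ closes the estimate, and the $W^{k,p}$ form~\eqref{CZG2} follows by summing the resulting bounds at all orders $j\leqslant k$.

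The second statement (with $\Vert\BBB\Vert_{L^\infty(M)}$ assumed and $p\in(1,+\infty)$ arbitrary) follows the same scheme with a substantial simplification: each factor $\nabla^{a_j}\theta_i$ appearing in $\mathcal R_k$ can be pulled out in $L^\infty$-norm directly, eliminating the need for any Sobolev bootstrap and lifting the restriction $p<n$; this parallels the passage from Theorem~\ref{GCZ0} to Theorem~\ref{GCZ1}. I expect the principal obstacle to be the bookkeeping in point (c) for the first statement: with $q>n$ possibly only marginally larger than $n$ and $p$ possibly close to $n$, the H\"older exponents in each multilinear product must be chosen so that every factor lands either in $L^q$ (controlled via the base case at exponent $q$) or in a space arising from a strictly lower-order instance of the current estimate; matching these exponents through Gagliardo--Nirenberg interpolation is where the hypothesis $q>n$ enters critically, and verifying that the matching goes through uniformly in $\mathfrak{C}^1_\delta(M_0)$ is the delicate point of the argument.
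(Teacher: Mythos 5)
Your route is genuinely different from the paper's: you differentiate the graphical mean curvature equation $k$ times in the local charts and apply Euclidean Calder\'on--Zygmund estimates to $\nabla^{k+2}\theta_i$, whereas the paper works covariantly, applying the uniform tensor inequality of Theorem~\ref{GCZ1} (in the form of Remark~\ref{GCZ1bis}, together with Remark~\ref{tenrem}) to $\nabla^{k-2}\BBB$, commuting derivatives via $\Riem=\BBB\star\BBB$, and using Simons' identity to replace $\nabla^{k-2}\Delta\BBB$ by $\nabla^k\HHH$ plus cubic terms $\nabla^s\BBB\star\nabla^r\BBB\star\nabla^t\BBB$. In principle your scheme could be pushed through, but as written it has a genuine gap at your ingredient (b): you claim that the base case at exponent $q$ together with $W^{1,q}\hookrightarrow L^\infty$ yields, ``after one bootstrap step'', an $L^\infty$ bound on $\BBB$ (equivalently on ${\mathrm{Hess}}^{\R^n}\!\theta_i$). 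This is not available under the hypotheses of the first statement: Theorem~\ref{GCZ0} only gives $\norma{\BBB}_{L^q(M)}\leqslant C\big(1+\norma{\HHH}_{L^q(M)}\big)$, i.e.\ $\theta_i\in W^{2,q}$ locally, which embeds into $C^{1,1-n/q}$ but controls nothing about $\sup\vert{\mathrm{Hess}}^{\R^n}\!\theta_i\vert$; to bootstrap to $L^\infty$ of the Hessian you would need $\nabla\HHH\in L^q$, which is not assumed --- the hypotheses provide only $\HHH\in L^q$, while $\nabla^k\HHH$ is controlled merely in $L^p$ with $p\in(1,n)$ and is moreover only allowed to enter the final estimate linearly. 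If such an $L^\infty$ bound on $\BBB$ were obtainable, the first statement would collapse into the second (whose constant is allowed to depend on $\norma{\BBB}_{L^\infty(M)}$), which is precisely what the theorem is structured to avoid.

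With (b) removed, the entire weight of the argument falls on your point (c), which you yourself flag as the delicate step but do not carry out: how to distribute integrability among the factors of the multilinear remainder $\mathcal{R}_k$ using only $\norma{\BBB}_{L^q(M)}$, and how to ensure that the total power of the top-order quantity is strictly below one so that it can be absorbed. This is exactly where the paper does the real work: each factor $\nabla^s\BBB$ is interpolated by the uniform Gagliardo--Nirenberg inequalities with a parameter $\theta_\alpha$ chosen strictly below $\overline{\theta}_\alpha=\frac{s+1}{k+1}$, so that the companion exponents $q_\alpha,q_\beta,q_\gamma$ remain below $q$ (hence only $\norma{\BBB}_{L^q(M)}$, controlled by Theorem~\ref{GCZ0}, is needed) while $\Theta=\theta_\alpha+\theta_\beta+\theta_\gamma<1$; Young's inequality then absorbs $\norma{\nabla^k\BBB}_{L^p(M)}^{\Theta}$ into the left-hand side. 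Until you supply an analogous exponent-matching and absorption argument for $\mathcal{R}_k$ --- and also for the Christoffel/commutator terms produced when translating the $L^p$ bound on $\nabla^{k+2}\theta_i$ back into one on the covariant derivative $\nabla^k\BBB$ --- the inductive step is not closed, so the proposal as it stands does not prove the first (and main) part of the theorem; the second part, where $\norma{\BBB}_{L^\infty(M)}$ is assumed, is indeed much simpler and your outline for it is plausible.
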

\begin{proof}
We first deal with the case $p\in(1,n)$. Fixed $k\in\N$, by means of inequality~\eqref{c1}, which holds with a constant $C=C(M_0,p,\delta,\Vert\BBB\Vert_{L^n(M)})$, by Remark~\ref{GCZ1bis} and taking into account Remark~\ref{tenrem}, we have
\begin{align}
\norma{\nabla^k\BBB}_{L^p(M)}=&\,\norma{\nabla_{i_1}\nabla_{i_2}(\nabla_{i_3}\cdots\nabla_{i_k}\BBB)}_{L^p(M)}\\
\leqslant &\,C\norma{\Delta(\nabla_{i_3}\cdots\nabla_{i_k}\BBB)}_{L^p(M)}+C\norma{\nabla_{i_3}\cdots\nabla_{i_k}\BBB}_{L^p(M)}\\
=&\,C\norma{g^{\ell m}\nabla_\ell\nabla_m\nabla_{i_3}\cdots\nabla_{i_k}\BBB}_{L^p(M)}+C\norma{\nabla^{k-2}\BBB}_{L^p(M)}\\
\leqslant&\,C\norma{g^{\ell m}\nabla_\ell\nabla_{i_3}\nabla_{m}\cdots\nabla_{i_k}\BBB}_{L^p(M)}+C\norma{\nabla^{k-2}\BBB}_{L^p(M)}\\
&\,+C\norma{\Riem\star\nabla^{k-2}\BBB}_{L^p(M)}+C\norma{\nabla\Riem\star\nabla^{k-3}\BBB}_{L^p(M)}\\
\leqslant&\,C\norma{g^{\ell m}\nabla_\ell\nabla_{i_3}\nabla_{i_4}\nabla_{m}\cdots\nabla_{i_k}\BBB}_{L^p(M)}+C\norma{\nabla^{k-2}\BBB}_{L^p(M)}\\
&\,+C\norma{\Riem\star\nabla^{k-2}\BBB}_{L^p(M)}+C\norma{\nabla\Riem\star\nabla^{k-3}\BBB}_{L^p(M)}\\
&\,+C\norma{\nabla^2\Riem\star\nabla^{k-4}\BBB}_{L^p(M)}\\
&\,\qquad\qquad\qquad\qquad\qquad{\cdots}\\
\leqslant&\,C\norma{g^{\ell m}\nabla_\ell\nabla_{i_3}\nabla_{i_4}\cdots\nabla_{i_k}\nabla_m\BBB}_{L^p(M)}+C\norma{\nabla^{k-2}\BBB}_{L^p(M)}\\
&\,+C\sum_{s=0}^{k-2}\norma{\nabla^s\Riem\star\nabla^{k-2-s}\BBB}_{L^p(M)}\\
\leqslant&\,C\norma{g^{\ell m}\nabla_{i_3}\nabla_\ell\nabla_{i_4}\cdots\nabla_{i_k}\nabla_m\BBB}_{L^p(M)}+C\norma{\nabla^{k-2}\BBB}_{L^p(M)}\\
&\,+C\sum_{s=0}^{k-2}\norma{\nabla^s\Riem\star\nabla^{k-2-s}\BBB}_{L^p(M)}\\
&\,\qquad\qquad\qquad\qquad\qquad{\cdots}\\
\leqslant&\,C\norma{g^{\ell m}\nabla_{i_3}\nabla_{i_4}\cdots\nabla_{i_k}\nabla_\ell\nabla_m\BBB}_{L^p(M)}+C\norma{\nabla^{k-2}\BBB}_{L^p(M)}\\
&\,+C\sum_{s=0}^{k-2}\norma{\nabla^s\Riem\star\nabla^{k-2-s}\BBB}_{L^p(M)}\\
=&\,C\norma{\nabla^{k-2}\Delta\BBB}_{L^p(M)}+C\norma{\nabla^{k-2}\BBB}_{L^p(M)}\\
&\,+C\sum_{s=0}^{k-2}\norma{\nabla^s\Riem\star\nabla^{k-2-s}\BBB}_{L^p(M)}
\end{align}
where the symbol $T \star S$ (following Hamilton~\cite{hamilton1}) denotes a tensor formed by a sum of terms each one given by some contraction of the pair $T$, $S$ with the inverse of the metric $g^{ij}$. A very useful property of such $\star$ product is that
$|T\star S|\leqslant C|T||S|$ where the constant $C$ depends only on the ``algebraic structure'' of $T\star S$, moreover, it clearly holds $\nabla T\star S=\nabla T\star S+T\star\nabla S$.\\
By formula~\eqref{Gauss-eq} for the Riemann tensor, we can then write $\Riem=\BBB\star\BBB$, hence
\begin{align}
\norma{\nabla^k\BBB}_{L^p(M)}
\leqslant&\,C\norma{\nabla^{k-2}\Delta\BBB}_{L^p(M)}+C\norma{\nabla^{k-2}\BBB}_{L^p(M)}+C\sum_{s=0}^{k-2}\norma{\nabla^s(\BBB\star\BBB)\star\nabla^{k-2-s}\BBB}_{L^p(M)}\\
\leqslant&\,C\norma{\nabla^{k-2}\Delta\BBB}_{L^p(M)}+C\norma{\nabla^{k-2}\BBB}_{L^p(M)}+C\sum_{\genfrac{}{}{0pt}{}
{s,r,t\in\N}{s+r+t=k-2}}\norma{\nabla^s\BBB\star\nabla^r\BBB\star\nabla^{t}\BBB}_{L^p(M)}\,.\label{gradkB}
\end{align}
Now, by Simons' identity~\eqref{codaz}, we have
$$
\nabla^{k-2}\Delta\BBB=\nabla^k\HHH+\nabla^{k-2}(\HHH\BBB^2)-\nabla^{k-2}(\vert\BBB\vert^2\BBB)\,,
$$
hence,
$$
\norma{\nabla^{k-2}\Delta\BBB}_{L^p(M)}\leqslant\norma{\nabla^k\HHH}_{L^p(M)}+C\sum_{\genfrac{}{}{0pt}{}
{s,r,t\in\N}{s+r+t=k-2}}\norma{\nabla^s\BBB\star\nabla^r\BBB\star\nabla^{t}\BBB}_{L^p(M)}\, .
$$
Using this estimate in inequality~\eqref{gradkB}, we conclude
\begin{align}
\norma{\nabla^k\BBB}_{L^p(M)}\leqslant &\,C\norma{\nabla^k\HHH}_{L^p(M)}+C\norma{\nabla^{k-2}\BBB}_{L^p(M)}+C\sum_{\genfrac{}{}{0pt}{}
{s,r,t\in\N}{s+r+t=k-2}}\norma{\nabla^s\BBB\star\nabla^r\BBB\star\nabla^{t}\BBB}_{L^p(M)}\,.
\end{align}
We now estimate any of the terms in the last sum as follows: we have
\begin{equation}\label{ppp1}
\norma{\nabla^s\BBB\star\nabla^r\BBB\star\nabla^{t}\BBB}_{L^p(M)}\leqslant\, C\norma{\nabla^s\BBB}_{L^{\alpha p}(M)}\norma{\nabla^r\BBB}_{L^{\beta p}(M)}\norma{\nabla^{t}\BBB}_{L^{\gamma p}(M)}\,,
\end{equation}
with 
$$
\alpha=\frac{k+1}{s+1}\,,\qquad\qquad
\beta=\frac{k+1}{r+1}\,,\qquad\qquad
\gamma=\frac{k+1}{t+1}\,,
$$
hence, $1/\alpha+1/\beta+1/\gamma=1$, as $s+r+t=k-2$. 
Moreover, using the interpolation estimates~\eqref{interpGN} (extended to tensors -- see Remark~\ref{tenrem}), there hold
\begin{align}
\norma{\nabla^s\BBB}_{L^{p\alpha}(M)} \leqslant &\,C \big(\norma{\nabla^k\BBB}_{L^p(M)}+\norma{\BBB}_{L^p(M)}\big)^{\overline{\theta}_\alpha}\norma{\BBB}_{L^n(M)}^{1-\overline{\theta}_\alpha}\\
\norma{\nabla^r\BBB}_{L^{p\beta}(M)} \leqslant &\,C \big(\norma{\nabla^k\BBB}_{L^p(M)}+\norma{\BBB}_{L^p(M)}\big)^{\overline{\theta}_\beta}\norma{\BBB}_{L^n(M)}^{1-\overline{\theta}_\beta}\\
\norma{\nabla^t\BBB}_{L^{p\gamma}(M)} \leqslant &\,C \big(\norma{\nabla^k\BBB}_{L^p(M)}+\norma{\BBB}_{L^p(M)}\big)^{\overline{\theta}_\gamma}\norma{\BBB}_{L^n(M)}^{1-\overline{\theta}_\gamma}
\end{align}
with $\overline{\theta}_\alpha=\frac{s+1}{k+1}$, $\overline{\theta}_\beta=\frac{r+1}{k+1}$ and $\overline{\theta}_\gamma=\frac{t+1}{k+1}$, determined by
\begin{align}
\frac{1}{p\alpha}=&\,\frac{s}{n} + \overline{\theta}_\alpha\Bigl(\frac{1}{p}-\frac{k}{n}\Bigr)
+\frac{1-\overline{\theta}_\alpha}{n}\\
\frac{1}{p\beta}=&\,\frac{r}{n} + \overline{\theta}_\beta\Bigl(\frac{1}{p}-\frac{k}{n}\Bigr)
+\frac{1-\overline{\theta}_\beta}{n}\\
\frac{1}{p\gamma}=&\,\frac{t}{n} + \overline{\theta}_\gamma\Bigl(\frac{1}{p}-\frac{k}{n}\Bigr)
+\frac{1-\overline{\theta}_\gamma}{n}.
\end{align}
Noticing that $\overline{\theta}_\alpha\in(s/k,1)$, $\overline{\theta}_\beta\in(r/k,1)$ and $\overline{\theta}_\gamma\in(t/k,1)$, if we choose $\theta_\alpha$, $\theta_\beta$ and $\theta_\gamma$ such that
$$
\frac{s}{k}<\theta_\alpha<\overline{\theta}_\alpha=\frac{s+1}{k+1},\quad\frac{r}{k}<\theta_\beta<\overline{\theta}_\beta=\frac{r+1}{k+1}\quad\text{and }\quad\frac{t}{k}<\theta_\gamma<\overline{\theta}_\gamma=\frac{t+1}{k+1}\,,
$$
respectively close to $\overline{\theta}_\alpha$, $\overline{\theta}_\beta$ and $\overline{\theta}_\gamma$, the uniquely determined values $q_\alpha$, $q_\beta$ and $q_\gamma$ satisfying
\begin{align}
\frac{1}{p\alpha}=&\,\frac{s}{n} + \theta_\alpha\Bigl(\frac{1}{p}-\frac{k}{n}\Bigr)
+\frac{1-\theta_\alpha}{q_\alpha}\\
\frac{1}{p\beta}=&\,\frac{r}{n} + \theta_\beta\Bigl(\frac{1}{p}-\frac{k}{n}\Bigr)
+\frac{1-\theta_\beta}{q_\beta}\\
\frac{1}{p\gamma}=&\,\frac{t}{n} + \theta_\gamma\Bigl(\frac{1}{p}-\frac{k}{n}\Bigr)
+\frac{1-\theta_\gamma}{q_\gamma}
\end{align}
must be close to $n$, thus properly choosing $\theta_\alpha$, $\theta_\beta$ and $\theta_\gamma$, as above, we have that $q_\alpha$, $q_\beta$ and $q_\gamma$ are smaller than $q>n$. Hence, by the interpolation estimates again, we have
\begin{align}
\norma{\nabla^s\BBB}_{L^{p\alpha}(M)} \leqslant &\,C \big(\norma{\nabla^k\BBB}_{L^p(M)}+\norma{\BBB}_{L^p(M)}\big)^{\theta_\alpha}\norma{\BBB}_{L^{q_\gamma}(M)}^{1-\theta_\alpha}\\
\norma{\nabla^r\BBB}_{L^{p\beta}(M)}\leqslant &\,C \big(\norma{\nabla^k\BBB}_{L^p(M)}+\norma{\BBB}_{L^p(M)}\big)^{\theta_\beta}\norma{\BBB}_{L^{q_\beta}(M)}^{1-\theta_\beta}\\
\norma{\nabla^t\BBB}_{L^{p\gamma}(M)} \leqslant &\,C \big(\norma{\nabla^k\BBB}_{L^p(M)}+\norma{\BBB}_{L^p(M)}\big)^{\theta_\gamma}\norma{\BBB}_{L^{q_\gamma}(M)}^{1-\theta_\gamma}\,.
\end{align}
Then, since $\norma{\BBB}_{L^{q_\alpha}(M)}$, $\norma{\BBB}_{L^{q_\beta}(M)}$ and $\norma{\BBB}_{L^{q_\gamma}(M)}$ are bounded by $C\norma{\BBB}_{L^q(M)}$, being the three exponents smaller that $q$ (the volumes are equibounded for all $M\in\mathfrak{C}^1_\delta(M_0)$), we get
\begin{align}
\norma{\nabla^s\BBB}_{L^{p\alpha}(M)}
\leqslant &\,C \big(\norma{\nabla^k\BBB}_{L^p(M)}+\norma{\BBB}_{L^p(M)}\big)^{\theta_\alpha}\norma{\BBB}_{L^{q}(M)}^{1-\theta_\alpha}\\
\norma{\nabla^r\BBB}_{L^{p\beta}(M)}
\leqslant &\,C \big(\norma{\nabla^k\BBB}_{L^p(M)}+\norma{\BBB}_{L^p(M)}\big)^{\theta_\beta}\norma{\BBB}_{L^{q}(M)}^{1-\theta_\beta}\\
\norma{\nabla^t\BBB}_{L^{p\gamma}(M)}
\leqslant &\,C \big(\norma{\nabla^k\BBB}_{L^p(M)}+\norma{\BBB}_{L^p(M)}\big)^{\theta_\gamma}\norma{\BBB}_{L^{q}(M)}^{1-\theta_\gamma}\,,
\end{align}
Letting
$$
\Theta=(\theta_\alpha+\theta_\beta+\theta_\gamma)<\frac{s+1}{k+1}+\frac{r+1}{k+1}+\frac{t+1}{k+1}=1\,,
$$
as $s+r+t=k-2$, putting these estimates in inequality~\eqref{ppp1} and recalling Theorem~\ref{GCZ0}, we conclude
\begin{align}
\norma{\nabla^s\BBB\star\nabla^r\BBB\star\nabla^{t}\BBB}_{L^p(M)}\!
\leqslant&\,C\big(\norma{\nabla^k\BBB}_{L^p(M)}\!+\norma{\BBB}_{L^p(M)}\big)^{\Theta}\,\norma{\BBB}_{L^q(M)}^{3-\Theta}\\
\leqslant&\,C\big(\norma{\nabla^k\BBB}_{L^p(M)}\!+\norma{\BBB}_{L^p(M)}\big)^{\Theta}\big(1+\norma{\HHH}_{L^q(M)}\big)^{3-\Theta}\\
\leqslant&\,C\big(\norma{\nabla^k\BBB}_{L^p(M)}\!+\norma{\BBB}_{L^p(M)}\big)^{\Theta}\,,\label{eqcar111}
\end{align}
with $C=C(M_0,p,\delta,\norma{\HHH}_{L^n(M)},\norma{\HHH}_{L^q(M)})=C(M_0,p,\delta,\norma{\HHH}_{L^q(M)})$, as $q>n$.\\
Hence, by means of Young inequality, as $\Theta<1$, we estimate
\begin{align}
\norma{\nabla^k\BBB}_{L^p(M)}\leqslant &\,C\norma{\nabla^k\HHH}_{L^p(M)}
\\&+C\norma{\nabla^{k-2}\BBB}_{L^p(M)}+
C\big(\norma{\nabla^k\BBB}_{L^p(M)}+\norma{\BBB}_{L^p(M)}\big)^\Theta\\
\leqslant &\,C\norma{\nabla^k\HHH}_{L^p(M)}
\\&+C\norma{\nabla^{k-2}\BBB}_{L^p(M)}+
C\varepsilon\norma{\nabla^k\BBB}_{L^p(M)}+C\norma{\BBB}_{L^p(M)}+C\,,
\end{align}
then choosing $\varepsilon>0$ such that $C\varepsilon<1/2$, after ``absorbing'' in the left hand side the term $C\varepsilon\norma{\nabla^k\BBB}_{L^p(M)}$ and estimating $\norma{\BBB}_{L^p(M)}$ with $C(1+\norma{\HHH}_{L^p(M)})$, we obtain
$$
\norma{\nabla^k\BBB}_{L^p(M)}\leqslant C\norma{\nabla^k\HHH}_{L^p(M)}+C\norma{\nabla^{k-2}\BBB}_{L^p(M)}+C\norma{\HHH}_{L^p(M)}+C\,.
$$
The term $\norma{\nabla^{k-2}\BBB}_{L^p(M)}$ can be treated analogously, by interpolation between $\norma{\nabla^k\BBB}_{L^p(M)}$ and $\norma{\BBB}_{L^p(M)}$ (it is actually easier to deal with it) and $\norma{\HHH}_{L^p(M)}\leqslant C(M_0,p,q,\delta)\norma{\HHH}_{L^q(M)}$, hence we finally have the desired estimate
$$
\norma{\nabla^k\BBB}_{L^p(M)}\leqslant C\norma{\nabla^k\HHH}_{L^p(M)}+C\,,
$$
with $C=C(M_0,p,q,\delta,\norma{\HHH}_{L^q(M)})$, for any $M\in \mathfrak{C}^1_\delta(M_0)$ with $\delta>0$ small enough.\\
If $p\in(1,+\infty)$, we argue as before, but using directly inequality~\eqref{c1}, which holds with a constant  $C=C(M_0,p,\delta,\Vert\BBB\Vert_{L^\infty(M)})$ and getting inequality~\eqref{eqcar111} with a constant $C=C(M_0,p,\delta,\Vert\BBB\Vert_{L^\infty(M)})$, by simply choosing a suitably large $q>n$ and estimating $\norma{\BBB}_{L^q(M)}$ with $C\norma{\BBB}_{L^\infty(M)}$. The rest of the proof goes in the same way, estimating all the terms $\norma{\BBB}_{L^q(M)}$ and $\norma{\HHH}_{L^q(M)}$ with $C\norma{\BBB}_{L^\infty(M)}$.
\end{proof}

\section{Other inequalities}

Let $M_0$ be a smooth and compact hypersurface embedded in $\R^{n+1}$, bounding a domain $E_0$ and $\eps>0$ the width of a tubular neighborhood $N_\eps$ of $M_0$. For any $\delta\in(0,\eps)$, we consider the family of domains 
\begin{equation}\label{psidescr3}
\mathcal{C}^1_\delta(E_0)=
\left\{E = \Psi(E_0)\,:\,
\begin{aligned}
&\, \text{ $\Psi:\overline{E}_0\to \overline E$ is a diffeomorphism with $\norma{\Psi-\mathrm{Id}}_{C^1(E_0)}<\delta$\ }\\
&\,\, \text{$\Psi(x)=x+\psi(x)\nu_0(x)$ for every $x\in M_0$ and $\norma{\psi}_{C^1(M_0)}<\delta$}
\end{aligned}\right\}
\end{equation}
where $\nu_0$ is the unit normal vector field pointing outward of $M_0$.\\
Then, the Jacobian of the map $\Psi :\overline{E}_0\to \overline E$ (and also the tangential one of its restriction to $M_0$) is bounded from above and from below by some constants which depend only on $\delta$ and the second fundamental form of $M_0$ (see Section~\ref{SobSec} for details).

It clearly follows that if $E\in \mathcal{C}^1_\delta(E_0)$, then $M=\partial E=\Psi(M_0)\in\mathfrak{C}^1_\delta(M_0)$. Moreover, if $M\in\mathfrak{C}^1_{\delta'}(M_0)$, then there exists a smooth function $\psi:M_0\to\R$ with $\norma{\psi}_{C^1(M_0)}<{\delta'}$, such that 
$M=\bigl\{x+\psi(x)\nu(x)\, : \, x\in M_0\bigr\}$, then we can construct a smooth diffeomorphism $\Psi:\overline{E}_0\to \overline E$ as follows ($E$ is the domain bounded by $M$):
$$
\Psi (x) = \begin{cases}
 x \, \, &\text{if $x \in E_0\setminus N_\eps$} \\
 x+ \zeta(d_0(x)/\varepsilon)\psi (\pi_0(x))\nabla^{\R^{n+1}}\!d_0(x) \,\,&\text{if $x \in \overline E_0\cap N_\eps$}
\end{cases} 
$$
where $d_0$ is the signed distance function from $M_0$ (which is negative in $E_0$) and $t\mapsto \zeta(t)$ is a smooth monotone nondecreasing function, defined on $\R$, such that it is equal to $1$ if $t\geqslant 0$ and to $0$ if $t\leqslant-1/2$, with $\vert\zeta'(t)\vert\leqslant 3$, for every $t\in\R$.
So, it follows
$$
\norma{\Psi-\mathrm{Id}}_{C^1(E_0)}=\norma{ \zeta(d_0(\cdot)/\varepsilon)\psi (\pi_0(\cdot))\nabla^{\R^{n+1}}\! d_0(\cdot)}_{C^1(\overline E_0\cap N_\eps)}\leqslant C(M_0,\varepsilon)\norma{\psi}_{C^1(M_0)}\,.
$$
Hence, fixed any $\delta\in(0,\varepsilon)$, depending the constant $C$ only on $M_0$ and $\varepsilon$, possibly choosing ${\delta'}$ small enough, the set $E$ belongs to $\mathcal{C}^1_\delta(E_0)$.

We now discuss some uniform inequalities involving also the domains bounded by the hypersurfaces.

\subsection{Trace inequalities}\ \\
Letting $E_0$, $M_0$, $\varepsilon>0$ and $\delta>0$ as above and any $E\in \mathcal{C}^1_\delta(E_0)$ (with associated smooth diffeomorphism $\Psi:\overline{E}_0\to \overline E$), it is well known that the {\em trace} of any function $u \in H^1(E)$ (a real function on $M=\partial E$, which we still simply denote by $u$, that coincides with the restriction of $u$ to $M$, if $u\in C^0(\overline E)$) is well defined and that the following {\em trace inequality} holds (see~\cite[Chapter~4, Proposition~4.5]{Taylor1}),
\begin{equation}\label{1E_0}
\|u\|^2_{H^{1/2}(M)}\leqslant C_E\int_{E}u^2+|\nabla u|^2\, dx\,,
\end{equation}
which implies
\begin{equation*}
\|u-\widetilde u\|^2_{H^{1/2}(M)}\leqslant C_E\int_{E}|\nabla u|^2\, dx\,,
\end{equation*}
where $\widetilde{u}=\fint_Eu\,dx$ (see also~\cite{Ev,Leoni}).\\
We want to show that these inequalities hold with uniform constants $C(M_0,\delta)$, for every $E\in \mathcal{C}^1_\delta(E_0)$.
Expressing $\|u\|^2_{H^{1/2}(M)}$ by means of the Gagliardo $W^{1/2,2}$--seminorm of a function $u\in L^2(M)$ and setting $\Phi=\Psi|_{M_0}:M_0\to M$, we have 
\begin{align}
\|u\|^2_{H^{1/2}(M)}=&\,\|u\|_{L^2(M)}+[u]_{W^{1/2,2}(M)}^2\\
=&\,\|u\|_{L^2(M)}+\int_{M}\int_{M}\frac{|u(y)-u(y^*)|^2}{|y-y^*|^{n+1}}\,d\mu(y)\,d\mu(y^*)\\
\leqslant&\,C\|u\comp\Phi\|_{L^2(M_0)}+\int_{M_0}\int_{M_0}\frac{|u(\Phi(x))-u(\Phi(x^*))|^2}{|\Phi(x)-\Phi(x^*)|^{n+1}}\,J\Phi(x)J\Phi(x^*)\,d\mu_0(x)d\mu_0(x^*)\\
\leqslant&\,C\|u\comp\Psi\|_{L^2(M_0)}+C\int_{M_0}\int_{M_0}\frac{|u(\Psi(x))-u(\Psi(x^*))|^2}{|x-x^*|^{n+1}}\,d\mu_0(x)d\mu_0(x^*)\\
\leqslant&\,C_{E_0}\int_{E_0}|u(\Psi(x))|^2+|\nabla^0(u\comp\Psi(x))|^2\, dx\\
\leqslant &\,C\int_{E}u^2+|\nabla u|^2\, dx=C\|u\|^2_{H^1(E)}\,,\label{eqcar444}
\end{align}
where the constant $C$ depends only on $E_0$ (we applied inequality~\eqref{1E_0} for $E_0$ in passing from the fourth to the fifth line) and $\delta$ (in bounding $|d\Psi|$, $|d\Phi|$, $J\Psi$ and $J\Phi$ above and below away from zero).

\begin{remark}
With a similar argument, we can show the following generalization of this inequality, with a uniform constant
\begin{equation}\label{traceineq}
\Vert u \Vert _{H^{s-1/2}(M)} \leqslant C(E_0,s,\delta)\Vert u\Vert _{H^s(E)}
\end{equation}
(see again~\cite[Chapter~4, Proposition~4.5]{Taylor1}), for $s\in(1/2,3/2)$.
\end{remark}

\subsection{Inequalities for harmonic extensions}\ \\
We let $E_0$, $M_0$, $\varepsilon>0$ and $\delta>0$ as above and $E\in \mathcal{C}^1_\delta(E_0)$ (with associated smooth diffeomorphism $\Psi:\overline{E}_0\to \overline E$), with $M= \pa E\in\mathfrak{C}^1_\delta(M_0)$.\\
We denote by $u:E\to\R$ the harmonic extension of a function $f:M\to\R$ in $H^{1/2}(M)$ to $E$. We aim to show that the following inequality (see~\cite[Chapter~5, Proposition~1.7]{Taylor1})
\begin{equation}\label{extineq}
\Vert u \Vert_{H^{1}(E)} \leqslant C_{E}\Vert f \Vert_{H^{1/2} (M)}\,,
\end{equation}
which implies
\begin{equation}
\int_{E}|\nabla u|^2\, dx \leqslant C_{E}\Vert f\Vert^2_{H^{1/2}(M)}\,,
\end{equation}
for every $E\in \mathcal{C}^1_\delta(E_0)$, with uniform constants $C=(E_0,\delta)$.\\
Arguing as above, in formula~\eqref{eqcar444}, we end up with the following inequalities:
\begin{align}
\Vert u \Vert_{H^{1}(E)} \leqslant&\, C(E_0,\delta) \Vert u\comp\Psi \Vert_{H^{1}(E_0)}\\
\Vert u\comp\Psi\Vert_{H^{1}(E_0)} \leqslant&\, C_{E_0}\Vert f\comp\Psi \Vert_{H^{1/2}(M_0)}=C_{E_0}\Vert f\comp\Phi \Vert_{H^{1/2}(M_0)}\\
\Vert f\comp\Phi \Vert_{H^{1/2}(M_0)} \leqslant&\, C(M_0,\delta) \Vert f \Vert _{H^{1/2}(M)}
\end{align}
where the second estimate is given by inequality~\eqref{extineq} for $E_0$. Putting them together, we have the conclusion.

\begin{remark}
As above, we have the following generalization, for $s\in[1/2,3/2)$,
\begin{equation}\label{extineq2}
\Vert u \Vert_{H^{s+1/2}(E)} \leqslant C(E_0,s,\delta)\Vert f \Vert_{H^{s} (M)}
\end{equation}
(see again~\cite[Chapter~5, Proposition~1.7]{Taylor1}).
\end{remark}

\section{Some remarks}

We collect here some remarks about the conclusions of the previous sections.

\begin{itemize}

\item All the constants depend on the geometric properties of $M_0$, in particular on the maximal width of a tubular neighborhood, its volume and its second fundamental form. Hence, uniformly controlling such quantities gives uniform estimates for larger families of hypersurfaces, 
see~\cite{breun1,breun2,breun3,della1,langer2} for a deeper and detailed discussion).

\item Notice that for Sobolev, Poincar\'e, interpolation, trace and ``harmonic extension'' inequalities, we do not ask $\delta>0$ to be small, but just $\delta<\eps$, while for the Calder\'on--Zygmund--type inequalities, that we worked out in Section~\ref{CZsec}, a smallness condition on $\delta$ is necessary for the conclusions.

\item All the inequalities holds uniformly also for families of immersed--only hypersurfaces (non necessarily embedded), if they can be expressed as graphs on a fixed compact, smooth hypersurface, possibly immersed--only too.

\item It is easy to see that everything we did still works also if the ambient is a {\em flat}, complete Riemannian manifold, in particular in any flat torus $\T^n$. With some effort, the results can be generalized to graph hypersurfaces in any complete Riemannian manifold, then the constants also depends on the geometry (in particular, on the curvature) of such an ambient space. 

\end{itemize}

\bibliographystyle{amsplain}
\bibliography{UniformIneqGraph}

\end{document}